\renewcommand{\@seccntformat}[1]{{\csname the#1\endcsname}{\normalsize .}\hspace{.5em}}
\def \[{\begin{equation}}
\def \]{\end{equation}}
\newtheorem{thm}{Theorem}[section]
\newtheorem{claim}{Claim}
\newtheorem{case}{Case} 
\newtheorem{problem}{Problem} 
\newtheorem{remark}{Remark}
\newtheorem{fact}{Fact}
\newenvironment{wst}
{\setlength{\leftmargini}{1.5\parindent}
 \begin{itemize}
 \setlength{\itemsep}{-1.1mm}}
{\end{itemize}}
\begin{document}
\baselineskip=0.23in
\title{\bf 	
	A generalization of the Chv\'{a}tal-Erd\H{o}s theorem\thanks{
{\it Email address}: chengkunmath@163.com (K. Cheng)}}
\author{Kun Cheng}
\affil{Department of Mathematics, East China Normal University, Shanghai, 200241, China}
\date{\today}
\maketitle
\begin{abstract}
	A well-known result of Chv\'{a}tal and Erd\H{o}s from 1972 states that a graph  with connectivity  not less than its independence number plus one  is hamiltonian-connected.
A graph $G$ is called an $[s,t]$-graph if any induced subgraph of $G$ of order $s$ has size at least $t.$ We prove that every $k$-connected $[k+1,2]$-graph is hamiltonian-connected except $kK_1\vee G_{k},$ 
where $k\ge 2$ and $G_{k}$ is an arbitrary graph of order $k$.  
This generalizes the Chv\'{a}tal-Erd\H{o}s theorem. 
\vskip 0.2cm
\noindent {\bf Keywords:}  
Chv\'{a}tal-Erd\H{o}s theorem; Hamiltonian-connected; $[s,t]$-graphs \vspace{0.2cm}

\noindent {\bf AMS Subject Classification:} 05C38; 05C40; 05C45
\end{abstract} 

\section{\normalsize Introduction}
We consider finite simple graphs and use standard terminology and notation from \cite{bondy1} and \cite{West}.    
Let $G=(V(G), E(G))$ be a graph with vertex set $V(G)$ and edge set $E(G)$. Then $n(G):=|V(G)|$ and $e(G):=|E(G)|$ are called the {\it order} and {\it size} of $G$, respectively.  
For $u, v\in V(G)$, we denote by $u\sim v$ if $u$ and $v$ are adjacent, and $u\not\sim v$ otherwise. For $v\in V(G)$, let $N_G(v)$ and $d_G(v)$ be the neighborhood and the degree of $v$ in $G$, respectively.
For $S\subseteq V(G)$, we denote by $G[S]$ the subgraph of $G$ induced by $S$, 
and let $e(S):=e(G[S])$ for short.  
We write $N_S(v)$ for $N_G(v)\cap S$ and let $d_S(v):=|N_S(v)|$. 
For a subgraph $H$ of $G$, let $N_H(v)=N_G(v)\cap V(H)$. 
Let $X,Y\subseteq V(G)$ be two disjoint vertex sets, we use $e(X,Y)$ to denote the number of edges with one end in $X$ and the other in $Y$. 
In particular, if $X=\{x\}$ is singleton, we usually write $e(x,Y)$  instead of $e(\{x\},Y)$.
We write $P_n$, $C_n$ and $K_n$ for the path, the cycle and the complete graph of order $n$, respectively.  
For two graphs $G$ and $H,$ $G\vee H$ denotes the {\it join} of $G$ and $H,$ which is obtained from the disjoint union $G+H$ by adding edges joining every vertex of $G$ to every vertex of $H.$  
For graphs we will use equality up to isomorphism, so $G = H$ means that $G$ and $H$ are isomorphic.

Denote by $\alpha (G)$ the independence number of a graph $G$.  
For two distinct vertices $x$ and $y$ in $G$, an {\em $(x, y)$-path}  is a path whose endpoints are $x$ and $y$.  
A Hamilton path (resp., cycle) in $G$ is a path (resp., cycle) containing every vertex of $G$.  
$G$ is {\em hamiltonian} if it contains a Hamilton cycle.  
A graph is called  {\em  hamiltonian-connected} if between any 
two distinct vertices there is a Hamilton path.
In 1972, Chv\'{a}tal and Erd\H{o}s~\cite{CE1} proved  the following result.
\begin{thm}[Chv\'{a}tal-Erd\H{o}s~\cite{CE1}]\label{thmCE}
	Let $k\ge 2$ be an integer.  If $G$ is a $k$-connected graph with $\alpha(G)\le k-1,$ then $G$ is hamiltonian-connected.
\end{thm}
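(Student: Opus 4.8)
The plan is to argue by contradiction using a \emph{longest $(x,y)$-path}, following the familiar scheme of the hamiltonicity version of the Chv\'{a}tal--Erd\H{o}s theorem but arranging the rerouting so that $x$ and $y$ stay as endpoints. Fix distinct $x,y\in V(G)$, let $P$ be an $(x,y)$-path of maximum length, orient it from $x$ to $y$, and for $u\in V(P)$ write $u^{+}$ and $u^{-}$ for the successor and predecessor of $u$ on $P$. Assume $P$ is not a Hamilton path; pick a component $H$ of $G-V(P)$ and set $A:=N_{V(P)}(H)$. Since $N_{G}(V(H))=A$, the set $A$ is a vertex cut of $G$ whenever $V(G)\setminus(V(H)\cup A)\ne\varnothing$, and when it is not, $|A|=|V(P)|$; using in the latter case that a $k$-connected graph contains long $(x,y)$-paths, we get $|A|\ge k$ in all cases. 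Fixing $v\in V(H)$, Menger's theorem now supplies a $k$-fan: $k$ paths $Q_{1},\dots,Q_{k}$ from $v$ to $V(P)$, internally disjoint from $V(P)$ and pairwise internally disjoint, with distinct ends $w_{1},\dots,w_{k}$, which of necessity lie in $A$.

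The heart of the matter is a single rerouting move. If $w_{i}$ and $w_{j}$ are ends of a $k$-fan, both in $V(P)\setminus\{x,y\}$, with $w_{i}$ before $w_{j}$ on $P$, then an edge $w_{i}^{+}w_{j}^{+}$ would let us delete $w_{i}w_{i}^{+}$ and $w_{j}w_{j}^{+}$ from $P$ and splice in $w_{i}\,\overline{Q_{i}}\,v\,Q_{j}\,w_{j}$ together with $w_{i}^{+}w_{j}^{+}$, producing an $(x,y)$-path on $V(P)\cup V(Q_{i})\cup V(Q_{j})$ that contains $v$ and is therefore longer than $P$; the same splice forbids $v\sim w_{i}^{+}$ for any end $w_{i}\in V(P)\setminus\{x,y\}$. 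Hence if some $k$-fan has all of its ends in $V(P)\setminus\{x,y\}$ --- which is automatic when $\{x,y\}\cap A=\varnothing$ --- then $\{v,w_{1}^{+},\dots,w_{k}^{+}\}$ is an independent set of $k+1$ vertices (distinct since $u\mapsto u^{+}$ is injective on $V(P)\setminus\{y\}$ and $v\notin V(P)$), contradicting $\alpha(G)\le k-1<k+1$.

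\emph{The obstacle is the case in which an endpoint is unavoidably an end of the fan.} Suppose $\{x,y\}\cap A\ne\varnothing$; after possibly reversing $P$ we may assume $y\in A$, and then a $k$-fan may be forced to use $y$ (and perhaps also $x$) as an end, while the successor shift fails at $y$ because $y^{+}$ does not exist. The fix is to shift to the predecessor there: replace the slot of the fan-end $y$ by $y^{-}$ (and, when $x$ is a forced end too, keep $x^{+}$ but drop the fan-path at $x$), arriving at a candidate set of the shape $\{v,y^{-}\}\cup\{w_{i}^{+}:w_{i}\in V(P)\setminus\{x,y\}\}$. Two things must now be checked. First, that this set is still independent: the new possible adjacencies $y^{-}\sim v$, $y^{-}\sim y$, $y^{-}\sim w_{i}^{+}$ and $x^{+}\sim y^{-}$ each have to be excluded by exhibiting a longer $(x,y)$-path. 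Second, that its size is still large enough: coincidences such as $w_{i}^{+}=y^{-}$, or the impossibility of placing both $v$ and $y^{-}$ in one independent set, can pull the size down to exactly $k$ --- and this is precisely why $\alpha(G)\le k-1$ (not merely $\alpha(G)\le k$) is assumed, so that $k>\alpha(G)$ still yields a contradiction. That the weaker bound $\alpha(G)\le k$ is genuinely insufficient is shown by $C_{5}$ between two non-adjacent vertices, where exactly this case arises and no contradiction is reached. The delicate work is the bookkeeping of this case --- cataloguing, for each sub-configuration of forced endpoints and small $|V(P)|$, which splices remain available and which adjacencies each one rules out; I would organise it by fixing once and for all the ``longer $(x,y)$-path'' construction and verifying that every putative edge among the candidate vertices triggers it.

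Once every configuration has been shown to produce an independent set of more than $\alpha(G)$ vertices, the maximality of $P$ is contradicted, so $P$ is a Hamilton $(x,y)$-path; since $x$ and $y$ were arbitrary, $G$ is hamiltonian-connected.
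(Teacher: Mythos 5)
The paper does not prove this statement; it quotes it from Chv\'{a}tal--Erd\H{o}s \cite{CE1}, so your attempt has to be measured against the classical longest-path argument. Your main case (all fan ends in $V(P)\setminus\{x,y\}$) is the standard one and is fine. The genuine gap is the ``obstacle case'', which you never actually carry out, and whose announced plan would fail. You propose to add $y^-$ to the candidate independent set and to exclude every putative adjacency ($y^-\sim v$, $y^-\sim w_i^+$, $x^+\sim y^-$, and even ``$y^-\sim y$'', which is an edge of $P$ and can never be excluded) ``by exhibiting a longer $(x,y)$-path''. But the adjacency $y^-\sim w_i^+$ admits no such single splice: the edge $w_i^+y^-$ joins the two ends of the segment of $P$ strictly between $w_i$ and $y$, and any rerouted path must enter its terminus $y$ either from $y^-$ (which just reproduces $P$) or from $H$ (which strands that segment). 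A concrete configuration makes the point: $P=xabcdy$ with $H=\{v\}$, $v\sim a$, $v\sim y$ and the chord $bd$ has $y^-\sim w_i^+$ yet no longer $(x,y)$-path at all. So the independence of your set is not obtainable by the advertised bookkeeping. Moreover your counting can itself collapse: when $x,y$ are both attachment vertices, you restrict to $w_i\in V(P)\setminus\{x,y\}$, and if additionally $y^-=w_i^+$ (i.e.\ $y^{-2}$ is an attachment vertex), the set has only $k-1$ elements, which does not contradict $\alpha(G)\le k-1$.

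The repair is to drop $y^-$ altogether; this is also how the classical proof runs. Let $A=N_P(H)$ (or the set of fan ends); take $U:=\{v\}\cup\{w^+: w\in A\setminus\{y\}\}$. Since no two consecutive vertices of $P$ lie in $A$ (your first splice), and since your crossing splice works verbatim when one of the two ends is $x$ (the path $x$, through $H$ to $w_j$, backwards along $P$ to $x^+$, edge $x^+w_j^+$, forwards to $y$ keeps $x$ and $y$ as endpoints), the set $U$ is independent, and it has at least $1+(|A|-1)\ge k$ vertices because at most the single end $y$ is discarded. As $\alpha(G)\le k-1$, this is the contradiction; the extra unit of connectivity is spent precisely on affording the loss of that one vertex, a point you half-observe but then do not exploit. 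Finally, your justification of $|A|\ge k$ via an unproved claim that $k$-connected graphs contain long $(x,y)$-paths is an unnecessary detour: if $N_P(H)=V(P)$, then two consecutive vertices of $P$ are attached to $H$ and $P$ lengthens immediately, so that case never occurs and $A$ is a genuine separator of size at least $k$.
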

Theorem~\ref{thmCE} is an important result in hamiltonian graph theory, 
which opened an entirely new avenue for investigation. We refer the reader to \cite{ChenGT, Chen2,Faud,Jack, Saito}.

Let $s$ and $t$ be given integers. A graph $G$ is called an {\it $[s,t]$-graph} if any induced subgraph of $G$ of order $s$ has size at least $t.$  
Clearly, $\alpha(G)\le k$ if and only if
$G$ is a $[k+1,1]$-graph. Thus the concept of an $[s,t]$-graph is an extension of the independence number.  Wang and Mou~\cite{Wang1} showed that every $[s,t]$-graph is a $[s+1,t+1]$-graph. 
For more advances in this topic, we refer the reader to \cite{Wang2, LiuWang, Zhan2} and  the references therein.


In 2015, Wang and Mou~\cite{Wang1} proved that every $k$-connected $[k+2,2]$-graph is hamiltonian, except Petersen graph and $(k+1)K_1\vee G_k,$ where $k\ge 1$ and  $G_k$ is an arbitrary graph of order $k.$
Motivated by their result, we prove the following.
\begin{thm}\label{thm1}
Let $k\ge 2$ be an integer. If $G$ is a $k$-connected $[k+1,2]$-graph, 
then $G$ is hamiltonian-connected unless $G= kK_1\vee G_{k},$ where $G_k$ is an arbitrary graph of order $k$.  
\end{thm}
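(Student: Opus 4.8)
The plan is to argue by contradiction, first using the Chv\'atal--Erd\H{o}s theorem to dispose of the easy range of the independence number, and then running an extremal ``longest $(x,y)$-path'' argument.

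Since a $[k+1,2]$-graph is in particular a $[k+1,1]$-graph, $\alpha(G)\le k$. If $\alpha(G)\le k-1$, Theorem~\ref{thmCE} already gives that $G$ is hamiltonian-connected, so assume $\alpha(G)=k$. Fix a maximum independent set $S=\{v_1,\dots,v_k\}$ and put $R=V(G)\setminus S$. Because $G$ is a $[k+1,2]$-graph and $S$ is independent, every $u\in R$ has at least two neighbours in $S$; and because $G$ is $k$-connected every vertex of $S$ has at least $k$ neighbours, all in $R$, so $|R|\ge k$. If $|R|=k$, then counting the edges between $S$ and $R$ gives $e(S,R)=\sum_{v\in S}d_G(v)\ge k^2=|S|\,|R|\ge e(S,R)$, forcing equality: $S$ is complete to $R$ and $d_G(v)=k$ for each $v\in S$, i.e.\ $G=kK_1\vee G[R]=kK_1\vee G_k$. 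So from now on $|R|\ge k+1$, hence $n(G)\ge 2k+1$, and it remains to prove $G$ is hamiltonian-connected.

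Suppose not, and choose $x\ne y$ admitting no Hamilton $(x,y)$-path; let $P=u_0u_1\cdots u_\ell$ (with $u_0=x$, $u_\ell=y$) be a longest $(x,y)$-path, so $V(P)\ne V(G)$. Let $H$ be a component of $G-V(P)$ and $h\in V(H)$. By the usual rotation considerations no two consecutive vertices of $P$ lie in $N_P(V(H))$ (otherwise a path through $H$ could be spliced into $P$), so $V(P)\setminus N_P(V(H))\ne\emptyset$ and $N_G(V(H))=N_P(V(H))$ is a cutset, whence $|N_P(V(H))|\ge k$; write its elements as $u_{i_1},\dots,u_{i_m}$ with $0\le i_1<\cdots<i_m\le\ell$ and $m\ge k$. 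A standard argument shows $\{u_{i_j+1}\colon i_j<\ell\}\cup\{h\}$ is independent (a violating adjacency would yield a longer $(x,y)$-path); since $\alpha(G)=k$ this forces $i_m=\ell$ and $m=k$, and the symmetric argument with predecessors forces $i_1=0$. Thus $N_G(V(H))=\{u_0=x,\ u_{i_2},\dots,u_{i_{k-1}},\ u_\ell=y\}$ has exactly $k$ elements with consecutive indices differing by at least $2$ (so $\ell\ge 2(k-1)$), and $I:=\{u_1,u_{i_2+1},\dots,u_{i_{k-1}+1},h\}$ is a maximum independent set. Applying the $[k+1,2]$-property to $I$, every vertex outside $I$ has at least two neighbours in $I$; but any second vertex $z\in V(H)$ would have $N_G(z)\subseteq V(H)\cup N_G(V(H))$, a set meeting $I$ only in $\{h\}$, so $d_I(z)\le 1$ -- a contradiction. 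Hence $H=\{h\}$; running the same reasoning on each component, $G-V(P)=\{h_1,\dots,h_p\}$ is an independent set with $p\le k$, each $h_j$ of degree exactly $k$, adjacent to both $x$ and $y$, and with consecutive-index-free neighbourhood on $P$. Since $n(G)=(\ell+1)+p\ge 2k+1$ while $\ell\ge 2(k-1)$, either $p\ge 2$, or the neighbourhood of some $h_j$ on $P$ has a ``gap'' of length $\ge 3$.

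The remaining, and main, difficulty is to convert this slack into a longer $(x,y)$-path, contradicting the choice of $P$. Applying the $[k+1,2]$-property once more to $I$ and to the symmetric predecessor maximum independent set $J=\{u_{\ell-1},u_{i_{k-1}-1},\dots,u_{i_2-1},h\}$, one locates either a chord of $P$ incident with a vertex lying strictly between two consecutive neighbours of some $h_j$ (when a gap of length $\ge 3$ exists), or an edge joining some $h_s$ to a non-attachment interior vertex of $P$ (when $p\ge 2$); combining such a chord or edge with the fact that each $h_j$ is adjacent to the two endpoints $x,y$ and to $k-2$ further vertices of $P$, one reroutes $P$ through $h_j$ while reinserting the vertex that would otherwise be stranded, producing an $(x,y)$-path on at least $\ell+2$ vertices. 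Making this precise requires a case analysis keyed to the position of the chord relative to the gap and, when $p\ge 2$, to the interaction of the different $h_j$; the recurring structural leverage is that $I$ and $J$, being maximum independent sets, have no internal edges, which pins down where every relevant chord can lie. Once the contradiction is reached, no longest $(x,y)$-path omits a vertex, so $G$ is hamiltonian-connected. I expect this final step to be where essentially all of the work lies.
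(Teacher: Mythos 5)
Your opening reduction is correct and is in fact a cleaner route to the exceptional graph than the paper takes: using Chv\'atal--Erd\H{o}s to dispose of $\alpha(G)\le k-1$, and the counting argument $e(S,R)\ge k^2=|S||R|$ to show that $|R|=k$ forces $G=kK_1\vee G_k$ (the paper instead recovers $kK_1\vee G_k$ inside its path analysis). Your structural claims about a longest $(x,y)$-path --- attachments non-consecutive, the successor set plus $h$ independent, hence $i_1=0$, $i_m=\ell$, exactly $k$ attachments, and singleton components outside $P$ --- are also sound and parallel the paper's Claims~\ref{c1}--\ref{c3} and Claim~\ref{c4}(i). (Incidentally, with your own tools you could already force $p=1$: a second outside vertex $h'$ would have at most one neighbour in $N_P^+(h)$ by the crossing argument and none in $\{h,u_1\}$, so $d_I(h')\le 1$, contradicting the $[k+1,2]$-property applied to $I\cup\{h'\}$.)

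The genuine gap is the final paragraph, which is a plan rather than a proof, and it is precisely where the theorem lives. Your sketch --- find one chord or one extra edge from the $[k+1,2]$-condition and ``reroute $P$ through $h_j$ while reinserting the stranded vertex'' to get a path on $\ell+2$ vertices --- does not obviously work, and the paper's proof shows why: a single such edge does not in general let you lengthen $P$ at all. The paper needs (i) the full set of crossing lemmas Claim~\ref{c4}(ii)--(v); (ii) a split into the case where some gap between consecutive attachments has exactly one internal vertex (Facts~\ref{f1}--\ref{f2}, which force \emph{all} gaps to be singletons and yield $kK_1\vee G_k$ --- in your setting $n\ge 2k+1$ this becomes a contradiction, but only after those two facts are proved); and (iii), when every gap has at least two internal vertices, a \emph{second} extremal choice: among all longest $(x,y)$-paths pick one maximizing the largest gap $\rho(P')$. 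The contradictions in that case are not obtained by lengthening $P$ directly but by constructing other longest paths with strictly larger $\rho$ (Facts~\ref{f3}--\ref{f5}, with the explicit reroutings of Tables~\ref{t1}--\ref{t2}), then pinning $\rho(P')=2$, hence $|G|=3k-1$, and finishing with a degree/connectivity count and separate endgames for $k\ge 4$ and $k=3$. None of this is routine case-checking that can be waved through; the secondary maximization $\rho(P')$ is a missing idea in your write-up, and your own closing sentence concedes that essentially all of the work remains to be done. As it stands the proposal establishes only the preliminary structure, not the theorem.
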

\begin{remark} 
If a graph $G$ satisfies the conditions of Theorem~\ref{thmCE}, then $G$ is a $k$-connected $[k,1]$-graph, and hence a $[k+1,2]$-graph. Note that  $kK_1\vee G_{k}$ has  independence number $k$.  
Thus Theorem~\ref{thm1} generalizes Theorem~\ref{thmCE}.
\end{remark}
The remaining sections are organized as follows.
We prove Theorem~\ref{thm1} in Section~\ref{s2},  
and give some concluding remarks in Section~\ref{s3}. 

\section{\normalsize Proof of Theorem~\ref{thm1}}\label{s2}
Assume that the graph $G$ satisfies the conditions of Theorem~\ref{thm1} and $G$ is not hamiltonian-connected.  We are to show that  $G= kK_1\vee G_{k}.$

When $k=2,$ $G$ is a $2$-connected $[3,2]$-graph. One sees $\alpha(G)\le 2.$
If $G$ is $3$-connected, by Theorem~\ref{thmCE}, $G$ is hamiltonian-connected, a contradiction. 
Thus, $G$ contains a vertex cut, say $S$, with cardinality 2.
If $n\ge 5$, taking $v_1,v_2,v_3$ not all from the same components of $G-S$, one has $e(G[\{v_1,v_2,v_3\}])\le 1$, which contradicts the fact that $G$ is a $[3,2]$-graph. 
Therefore, $n\le 4$. 
Since $G$ is 2-connected, 
$G\in\{ K_3,K_4, C_4,  K_4-e\}$. 
Since the complete graph is hamiltonian-connected, $G\in\{2K_1\vee 2K_1, 2K_1\vee K_2\},$  as desired. 

We now consider the case $k\ge 3.$ 
Note that there exist two vertices $u,v\in V(G)$ such that there exists no Hamilton $(u,v)$-path in $G.$ 
Let $P=v_0v_1\ldots v_l$ 
be a longest $(u,v)$-path, where $u=v_0$ and $v=v_l.$ 
Let 
\begin{align*}
	v_i^{+p}=v_{i+p} \text{~~and~~} 
	v_i^{-p}=v_{i-p}.
\end{align*}
Set $v_i^+=v_{i+1}$ and $v_i^-=v_{i-1}$ for short. 
If $a<b,$
we set 
\begin{align*}
	v_a \overrightarrow{P} v_b:=v_a v_{a+1}\ldots  v_b \text{~~and~~} 
	v_b \overleftarrow{P} v_a:=v_b v_{b-1}\ldots  v_a.
\end{align*}

Note that $G-V(P)\neq \emptyset$. For a component $H$ of $G-V(P)$, 
let 
$$N_P(H)=\bigcup_{v\in V(H)} N_P(v).$$
One has $|N_P(H)|\ge k$  since $G$ is $k$-connected. 
We write 
\begin{align*}
N_P^+(H):=\{w^+ : w\in N_P(H) \setminus \{v_l\}\},~~
N_P^-(H):=\{w^- : w\in N_P(H) \setminus \{v_0\}\}.
\end{align*}

We proceed with a sequence of claims, the first one is a direct consequence of the fact that $P$ is a longest $(v_0,v_l)$-path.
\begin{claim}\label{c1}
\begin{wst}
\item[{\rm (i)}]
If $w\in N_P(H)$, then $\{w^-,w^+\} \cap N_P(H)=\emptyset;$ 
\item[{\rm (ii)}]
$N_P^+(H)$ (resp., $N_P^-(H)$) is an independent set of $G$.
\end{wst}
\end{claim}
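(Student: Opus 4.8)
Claim \ref{c1} is a standard "longest path" lemma, and the proof is the classical rotation argument. Here's my plan.

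The plan is to prove Claim~\ref{c1} by the classical rotation argument along the longest $(v_0,v_l)$-path $P$: whenever one of the stated conditions fails, we splice in a piece of the component $H$ to build a $(v_0,v_l)$-path that contains every vertex of $P$ together with at least one vertex of $H$, contradicting the maximality of $P$.

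For part (i), suppose toward a contradiction that $w=v_i\in N_P(H)$ and also $w^{+}=v_{i+1}\in N_P(H)$ (so in particular $w\neq v_l$; the case $w^{-}\in N_P(H)$ is handled symmetrically, e.g.\ by reversing $P$). Choose $h,h'\in V(H)$ with $h\sim v_i$ and $h'\sim v_{i+1}$; since $H$ is connected there is an $(h,h')$-path $Q$ contained in $H$, which is trivial if $h=h'$. Then
$$v_0\overrightarrow{P}v_i\, h\, Q\, h'\, v_{i+1}\overrightarrow{P}v_l$$
is a $(v_0,v_l)$-path containing all of $V(P)$ and at least one vertex of $H$, hence strictly longer than $P$ — a contradiction.

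For part (ii), argue by contradiction that $N_P^{+}(H)$ contains two adjacent vertices, say $v_{i+1}$ and $v_{j+1}$ with $i<j$; by definition $v_i,v_j\in N_P(H)\setminus\{v_l\}$, so we may pick $h_i,h_j\in V(H)$ with $h_i\sim v_i$, $h_j\sim v_j$, joined by a path $Q$ inside the connected graph $H$. Then
$$v_0\overrightarrow{P}v_i\, h_i\, Q\, h_j\, v_j\overleftarrow{P}v_{i+1}\, v_{j+1}\overrightarrow{P}v_l$$
is again a $(v_0,v_l)$-path: it uses the segment $v_0\ldots v_i$, then $Q$, then the reversed segment $v_j\ldots v_{i+1}$, then the edge $v_{i+1}v_{j+1}$, then $v_{j+1}\ldots v_l$; its vertex set is $V(P)\cup V(Q)$, so it is strictly longer than $P$, a contradiction. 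The statement for $N_P^{-}(H)$ follows by applying this to the reversed path $v_l\overleftarrow{P}v_0$.

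I do not expect a genuine obstacle here; the only points needing a moment's care are the degenerate cases — $h_i=h_j$ (so $Q$ is a single vertex), $j=i+1$ (so $v_j\overleftarrow{P}v_{i+1}$ is trivial), and $w\in\{v_0,v_l\}$ in part (i) (so only one of $w^{-},w^{+}$ is defined and must be excluded) — but in each of these the displayed walks remain honest $(v_0,v_l)$-paths that are strictly longer than $P$. This claim is a warm-up; the substantive difficulties will arise in the later claims, where the $[k+1,2]$-condition must be invoked to control the structure around $P$ and $H$.
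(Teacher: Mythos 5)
Your proof is correct: the rotation/splicing argument, including the careful handling of the degenerate cases, is exactly the standard reasoning that the paper invokes when it states Claim~\ref{c1} as ``a direct consequence of the fact that $P$ is a longest $(v_0,v_l)$-path'' without writing out details. Nothing is missing, and your route coincides with the intended one.
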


\begin{claim}\label{c2}
$|G-V(P)|=1$.
\end{claim}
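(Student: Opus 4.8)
The goal is to show that the component $H$ of $G-V(P)$ is a single vertex; equivalently, that $G-V(P)$ has exactly one vertex (since we will presumably show there is only one component first, or handle all of $G-V(P)$ at once). The natural strategy is an extremal/counting argument driven by the $[k+1,2]$-property applied to cleverly chosen independent-ish sets of size $k+1$. The starting point is Claim~\ref{c1}: writing $W=N_P(H)$, we have $|W|\ge k$, the vertices of $W$ are pairwise non-consecutive on $P$, and each of $N_P^+(H)$ and $N_P^-(H)$ is independent. The key first step is to enlarge these to independent sets that also avoid $V(H)$: I would take a vertex $h\in V(H)$ and look at $\{h\}\cup N_P^+(H)$ (or the $-$ version). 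Using the longest-path property one shows no vertex of $N_P^+(H)$ is adjacent to $h$ — indeed if $v_{i+1}\sim h$ with $v_i\in N_P(H)$, say $v_i\sim h'$ for $h'\in V(H)$, then rerouting through a Hamilton path of (a subgraph of) $H$ from $h$ to $h'$ would extend or create a longer $(v_0,v_l)$-path, a contradiction; one has to be a little careful when $H$ is not itself a single vertex, which is exactly why this claim is being proved. So $\{h\}\cup N_P^+(H)$ is independent when $v_l\notin W$; handle $v_l\in W$ (and symmetrically $v_0\in W$) by passing to the $-$ side or by a small case analysis.

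**Using the $[k+1,2]$-property.** Suppose for contradiction $|V(H)|\ge 2$, so $H$ contains an edge, and pick $h_1h_2\in E(H)$. If $|W|\ge k$ and, say, $v_0,v_l\notin W$, then $N_P^+(H)$ has size $\ge k$; take any $k-1$ of its vertices together with $h_1,h_2$ to form a set $A$ of size $k+1$. Since $A$ is the union of the independent set $N_P^+(H)$-part and the edge $h_1h_2$, and (by the extended claim above) there are no edges between $V(H)$ and $N_P^+(H)$, we get $e(G[A])=1$, contradicting the $[k+1,2]$-property. This already forces $|V(H)|=1$ in the "generic" case. The remaining work is the boundary cases where $v_0\in W$ or $v_l\in W$ (so one of $N_P^\pm(H)$ might be short by one); there one uses the other side, or incorporates $v_0$ or $v_l$ itself. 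Note that if $v_0\in W$ then $v_0\sim h$ for some $h\in V(H)$, and $v_0 h \overline{H} h' P$-type rerouting arguments, combined with $v_0=u$ being an endpoint we are trying to join, pin things down; a clean way is: if $v_0\in W$, then $\{h\}\cup N_P^+(H)\setminus\{v_1\}$ together with one more carefully chosen vertex still has size $\ge k+1$ and few edges.

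**Ruling out $\ge 2$ components.** Once each component is a single vertex, suppose there are two of them, $x$ and $y$. Then $\{x,y\}$ is independent, and $N_P(x),N_P(y)$ each have size $\ge k$, consist of non-consecutive vertices, and $N_P^+(x)$ is independent and disjoint from $\{x,y\}$. Pick $k-1$ vertices of $N_P^+(x)$ together with $x,y$: this set has size $k+1$, $x\not\sim y$, $x$ is non-adjacent to all of $N_P^+(x)$, so the only possible edges are between $y$ and $N_P^+(x)$. To get a contradiction one needs to also kill those edges — so instead I would choose the $k-1$ vertices from $N_P^+(x)$ to additionally lie outside $N_P(y)^{++}$ or use that $N_P^+(x)\cup N_P^+(y)$ enjoys extra independence/rerouting structure. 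Alternatively, and more robustly: by Claim~\ref{c2} we only need $|G-V(P)|=1$, so it suffices to rule out two components OR two vertices in one component — the single cleanest route is to prove directly that $G-V(P)$ is independent of size $1$ by the $[k+1,2]$ bound applied to $(G-V(P))\cup(k-1)$ vertices of a common "$+$-shift" independent set, showing any second vertex of $G-V(P)$ creates a size-$(k+1)$ induced subgraph with at most one edge.

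**Main obstacle.** The technical heart — and the step I expect to be fussiest — is the rerouting lemma asserting \emph{no edges between $V(H)$ and $N_P^+(H)$ (and $N_P^-(H)$)} in full generality: when $H$ has several vertices one must use that $H$ is connected to route a path inside $H$ between the two "attachment" vertices, and one must track the endpoints $v_0=u$, $v_l=v$ so that the rerouted path is still a $(u,v)$-path and strictly longer. The secondary obstacle is the bookkeeping in the boundary cases $v_0\in N_P(H)$ and/or $v_l\in N_P(H)$, where the $\pm$-shifted sets lose a vertex and one must recover size $k+1$ by a substitute vertex while keeping the induced edge count at $\le 1$; here I expect to split into a couple of sub-cases according to which of $v_0,v_l$ lie in $N_P(H)$, using the symmetry $N_P^+\leftrightarrow N_P^-$ to halve the work.
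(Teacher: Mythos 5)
Your first half (a component $H$ with $|V(H)|\ge 2$) is fine and is essentially the paper's argument: pick two vertices of $H$, add $k-1$ vertices of $N_P^+(H)$, and note that any edge from $V(H)$ to $N_P^+(H)$ is impossible, so the set of $k+1$ vertices spans at most one edge, contradicting the $[k+1,2]$-property. Note, though, that the step you single out as the ``technical heart'' is actually immediate: if $h\in V(H)$ and $h\sim w^+$ with $w\in N_P(H)$, then by the very definition of $N_P(H)$ we get $w^+\in N_P(H)$, contradicting Claim~\ref{c1}(i); no fresh rerouting through $H$ is needed, and the boundary worries about $v_0,v_l\in N_P(H)$ evaporate because $|N_P^+(H)|\ge k-1$ always suffices once you add two vertices of $H$.

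The genuine gap is in the case you treat last, namely two singleton vertices $y,y'$ of $G-V(P)$ lying in different components. There the set $\{y,y'\}\cup N_P^+(y)$ can a priori contain many edges between $y'$ and $N_P^+(y)$, and you explicitly acknowledge this but never supply the argument that kills them. The paper's key observation is a rerouting through \emph{both} outside vertices: if $y'\sim v_{a+1}$ and $y'\sim v_{b+1}$ with $v_a,v_b\in N_P(y)$ and $a<b$, then
$v_0\overrightarrow{P}v_a\, y\, v_b\overleftarrow{P}v_{a+1}\, y'\, v_{b+1}\overrightarrow{P}v_l$
is a $(v_0,v_l)$-path longer than $P$; hence $y'$ has at most one neighbour in $N_P^+(y)$, so $e(\{y,y'\}\cup N_P^+(y))\le 1$, a contradiction. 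Your proposed substitutes do not close this hole: choosing $k-1$ vertices of $N_P^+(y)$ avoiding $N(y')$ presupposes $|N_P^+(y)\setminus N(y')|\ge k-1$, which is exactly what the missing lemma would guarantee, and the ``alternatively, more robustly'' paragraph merely restates the desired conclusion that any second vertex yields a $(k+1)$-set with at most one edge. Without the two-attachment rerouting above (or an equivalent), the case of two distinct components is not proved.
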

\begin{proof}[Proof of Claim~\ref{c2}]
Choose $y\in V(H)$. Suppose to the contrary that $|G-V(P)|\ge 2$. 
Then there exists another vertex $y'\in V(G)\setminus V(P).$ 
Setting $U:=\{y,y'\}\cup N_P^+(H)$, one has $|U|\ge k+1.$ 

If $y'\in V(H)$, by Claim~\ref{c1}(i), $y$ (resp., $y'$) is nonadjacent to any vertex in $N_P^+(H)$. Combining with Claim~\ref{c1}(ii) gives $e(U)\le 1$, which contradicts the fact that $G$ is a $[k+1,2]$-graph.

Thus, $y'\not\in V(H)$ and $|X|=1$ for any component $X$ of $G-V(P).$ We assert that $y'$ is adjacent to at most one vertex in $N_P^+(y)$. 
If not, suppose that $y'\sim v_{a+1}$ and $y'\sim v_{b+1}$ where $v_a,v_b\in N_P(y)$ and $a<b.$ 
Then $v_0\overrightarrow{P}v_a y v_b \overleftarrow{P} v_{a+1} y' v_{b+1}\overrightarrow{P} v_l$ is a $(v_0,v_l)$-path longer than $P,$ which is impossible. 
Now $U=\{y,y'\}\cup N_P^+(y)$ and 
$e(U)$ is again at most one, a contradiction. This proves Claim~\ref{c2}.
\end{proof}

From Claim~\ref{c2}, by $y$ we denote the only vertex of $G-V(P).$

\begin{claim}\label{c3}
$d_P(y)=k$.
\end{claim}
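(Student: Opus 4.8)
The plan is to bound $d_P(y)$ from above by $k$; combined with the fact that $|N_P(y)| = |N_P(H)| \ge k$ (which holds since $G$ is $k$-connected and, by Claim~\ref{c2}, $H = \{y\}$), this forces $d_P(y) = k$. Note first that $v_0, v_l \notin N_P(y)$: if $y \sim v_0$, then $y v_0 \overrightarrow{P} v_l$ would be a $(y, v_l)$-path, but more to the point, appending $y$ at the $v_0$-end and rerouting cannot directly help — instead the cleaner observation is that if $y \sim v_0$ then $v_l\,\overleftarrow{P}\,v_1 v_0 y$ is a $(v_l, y)$-path, which does not contradict anything by itself; rather, I would argue that since there is no Hamilton $(v_0,v_l)$-path, $P$ omits $y$, and if $y \sim v_0$ one checks a longer path or uses it later — actually the key point I would use is simply that $v_0 \notin N_P(y)$ and $v_l \notin N_P(y)$ would follow because otherwise we could extend $P$ to include $y$, contradicting maximality of $P$ as a $(v_0,v_l)$-path only if the other endpoint is preserved. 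Let me restructure: if $y \sim v_0$, then $y v_0 v_1 \cdots v_l$ is a $(v_0, v_l)$-path? No. So instead I note $y \sim v_0$ gives a $(y, v_l)$-path on $V(P) \cup \{y\}$; this is longer than $P$, but it is not a $(v_0, v_l)$-path, so it does not contradict the maximality of $P$ \emph{among $(v_0,v_l)$-paths}. Hence I cannot exclude $v_0, v_l$ this cheaply, and I should instead work directly with $N_P^+(y)$.

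Here is the approach I would actually carry out. Consider $U := \{y\} \cup N_P^+(H) = \{y\} \cup N_P^+(y)$. By Claim~\ref{c1}(i), $y$ is nonadjacent to every vertex of $N_P^+(y)$ (since $w \in N_P(y)$ forces $w^+ \notin N_P(y)$), and by Claim~\ref{c1}(ii), $N_P^+(y)$ is an independent set. Therefore $e(U) = 0$. Since $G$ is a $[k+1,2]$-graph, every induced subgraph on $k+1$ vertices has at least $2$ edges; in particular $|U| \le k$, for otherwise $G[U']$ for any $(k+1)$-subset $U' \subseteq U$ would have no edges. Now $|N_P^+(y)| = |N_P(y) \setminus \{v_l\}| = d_P(y) - [\,v_l \in N_P(y)\,]$. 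If $v_l \notin N_P(y)$, then $|U| = d_P(y) + 1 \le k$, giving $d_P(y) \le k-1 < k$, contradicting $d_P(y) \ge k$; hence $v_l \in N_P(y)$, and symmetrically (using $N_P^-(y)$) $v_0 \in N_P(y)$. But then $|U| = d_P(y) - 1 + 1 = d_P(y)$, so $d_P(y) \le k$, and with $d_P(y) \ge k$ we conclude $d_P(y) = k$.

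Wait — I must double-check the edge case where $v_l \in N_P(y)$: then $v_l^+$ is undefined, so $v_l$ contributes nothing to $N_P^+(y)$, giving $|N_P^+(y)| = d_P(y) - 1$ and $|U| = d_P(y)$, as claimed. The symmetric argument with $U^- := \{y\} \cup N_P^-(y)$ handles $v_0$: $e(U^-) = 0$ by the same two parts of Claim~\ref{c1}, forcing $|U^-| \le k$, and since $|N_P^-(y)| = d_P(y) - [\,v_0 \in N_P(y)\,]$ we similarly deduce $v_0 \in N_P(y)$ and $d_P(y) \le k$. Thus $d_P(y) = k$, proving Claim~\ref{c3}, and as a useful byproduct $v_0, v_l \in N_P(y)$.

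The main obstacle is a bookkeeping one rather than a conceptual one: getting the endpoint corrections right. Because $N_P^+(H)$ is defined to exclude the successor of $v_l$ (there is none) and $N_P^-(H)$ to exclude the predecessor of $v_0$, the size of $U$ depends on whether $v_0$ or $v_l$ lies in $N_P(y)$, and the argument must use both the $+$ and the $-$ versions to pin down both endpoints and then land exactly on $|U| = d_P(y)$. Once the cardinality count is done carefully, the $[k+1,2]$ condition does all the work, since $e(U) = 0$ immediately caps $|U|$ at $k$. I would also record explicitly that $v_0, v_l \in N_P(y)$, as this is likely needed in the claims that follow.
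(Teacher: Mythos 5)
Your final argument is correct and is essentially the paper's own proof: both rest on the observation that $\{y\}\cup N_P^+(y)$ induces no edges (by Claim~\ref{c1}(i)--(ii)), which caps its size at $k$ by the $[k+1,2]$ property, while $k$-connectivity gives $d_P(y)\ge k$. The only difference is your endpoint bookkeeping, which as a byproduct yields $v_0,v_l\in N_P(y)$ --- a fact the paper instead proves separately as Claim~\ref{c4}(i) via the independent set $\{y\}\cup N_P^-(y)$.
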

\begin{proof}[Proof of Claim~\ref{c3}]
Note that $d_P(y)\ge k$. Suppose $d_P(y)\ge k+1.$ 
Setting $A:=\{y\}\cup N_P^+(y)$, we get $|A|\ge k+1$ and $e(A)=0$, a contradiction.
\end{proof}
By Claim~\ref{c3}, we may write $N_P(y)=\{v_{i_1},\ldots,v_{i_k}\},$ with $0\le i_1<\cdots<i_k\le l$. 
\begin{claim}\label{c4}
\begin{wst}
\item[{\rm (i)}]
$i_1=0$ and $i_k=l$;
\item[{\rm (ii)}] 
$N_{v_{a+2} \overrightarrow{P} v_b}^-(v_{a+1})\cap N(v_{b+1})=\emptyset$ and $N_{v_{a+1} \overrightarrow{P} v_{b-1}}^-(v_{a-1})\cap N(v_{b-1})=\emptyset$,   for $a,b\in \{i_1,\ldots, i_k\}$ with $a<b$; 
\item[{\rm (iii)}] 
$N_{v_a \overrightarrow{P} v_b}^+(v_{a-1})\cap N(v_{b+1})=\emptyset$, for $a,b\in \{i_2,\ldots, i_k\}$ with $a<b$;
\item[{\rm (iv)}] 
$N_{v_b \overrightarrow{P} v_l}^+(v_{a-1})\cap N(v_{b-1})=\emptyset$, for $a,b\in \{i_2,\ldots, i_{k-1}\}$ with $a<b$;
\item[{\rm (v)}] 
If $w\in N_P(v_{a-1})$, then $w^+\not\sim v_{a-1},$ where $a\in \{i_2,\ldots, i_k\}.$
\end{wst}
\end{claim}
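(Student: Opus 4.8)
The plan is to prove all five items by contradiction with the maximality of $P$. Since by Claim~\ref{c2} the vertex $y$ is the only vertex of $G$ lying off $P$, any $(v_0,v_l)$-path that uses every vertex of $V(P)\cup\{y\}$ is longer than $P$ (indeed a Hamilton $(v_0,v_l)$-path of $G$, contradicting that no Hamilton $(u,v)$-path exists). So for each item I will assume the stated non-adjacency fails and exhibit such a path. Throughout I use Claim~\ref{c1}, and — once item (i) is in hand — the edges $yv_0$, $yv_l$ (from (i)) and $yv_{i_j}$ for every $j$ (by the definition of $N_P(y)$).

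Item (i) is the one I would not prove by path surgery. If $i_k<l$, i.e.\ $v_l\notin N_P(y)$, then $N_P^+(y)=\{v_{i_1+1},\dots,v_{i_k+1}\}$ consists of $k$ distinct vertices of $P$, none equal to $v_0$ or to $y$; by Claim~\ref{c1}(i) none of them is adjacent to $y$, and by Claim~\ref{c1}(ii) they are pairwise non-adjacent. Hence $\{y\}\cup N_P^+(y)$ is an independent set of order $k+1$, contradicting that $G$ is a $[k+1,2]$-graph; so $i_k=l$. The mirror argument applied to $N_P^-(y)$ gives $i_1=0$.

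For items (ii)--(iv) the recipe is uniform. Negating the statement yields an index $c$ on the prescribed sub-arc of $P$ together with two ``chords'' of $P$: one joining $v_c$ (or $v_{c\pm1}$) to $v_{a-1}$ or $v_{a+1}$, and one joining $v_{c\mp1}$ (or $v_c$) to $v_{b-1}$ or $v_{b+1}$. Cutting $P$ at $v_a/v_{a-1}$, at $v_c/v_{c\pm1}$ and at $v_b/v_{b\pm1}$, reversing the intermediate arc(s), and re-joining the pieces by these two chords together with the edges $yv_a$ and $yv_b$ (with $y$ inserted between $v_a$ and $v_b$), I obtain the $(v_0,v_l)$-paths
\begin{gather*}
v_0\overrightarrow{P}v_a\, y\, v_b\overleftarrow{P}v_c\, v_{a+1}\overrightarrow{P}v_{c-1}\, v_{b+1}\overrightarrow{P}v_l \quad (\text{first part of (ii)}),\\
v_0\overrightarrow{P}v_{a-1}\, v_c\overrightarrow{P}v_{b-1}\, v_{c-1}\overleftarrow{P}v_a\, y\, v_b\overrightarrow{P}v_l \quad (\text{second part of (ii)}),\\
v_0\overrightarrow{P}v_{a-1}\, v_c\overleftarrow{P}v_a\, y\, v_b\overleftarrow{P}v_{c+1}\, v_{b+1}\overrightarrow{P}v_l \quad (\text{for (iii)}),\\
v_0\overrightarrow{P}v_{a-1}\, v_c\overleftarrow{P}v_b\, y\, v_a\overrightarrow{P}v_{b-1}\, v_{c+1}\overrightarrow{P}v_l \quad (\text{for (iv)}).
\end{gather*}
In each line it remains only to check that consecutive terms are joined by an edge — the two chords, the edges through $y$, and ordinary edges of $P$ — and that the listed arcs partition $V(P)$ with $y$ appearing exactly once; the inequalities imposed on $a,b,c$ in the statements are precisely what keeps every arc non-empty and well defined. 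The only minor care is for degenerate positions such as $c=a$ or $c=b$, where a chord collapses to an edge of $P$ (and the construction still goes through) or the hypothesis ``$v_{c\pm1}\sim v_{b\pm1}$'' becomes a loop and is vacuous.

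Item (v) is the delicate one and is where I expect the real work to lie. Assume $w=v_m\in N_P(v_{a-1})$ and $v_{m+1}=w^+\sim v_{a-1}$; since $v_{a-1}$ is not its own neighbour $m\ne a-1$, and since $v_{m+1}\ne v_{a-1}$ we have $m\ne a-2$, so $v_m,v_{m+1}$ lie either entirely to the left of $v_{a-2}$ or entirely to the right of $v_a$. The plan is again to reverse the sub-arc of $P$ between the consecutive neighbours $v_m,v_{m+1}$ of $v_{a-1}$ and to splice in $y$; for instance, when $a\le m$ and $v_{m+1}\in N_P(y)$ one gets the Hamilton $(v_0,v_l)$-path $v_0\overrightarrow{P}v_{a-1}\, v_m\overleftarrow{P}v_a\, y\, v_{m+1}\overrightarrow{P}v_l$, and symmetrically, when $m\le a-3$ and $v_{m+1}\in N_P(y)$, the path $v_0\overrightarrow{P}v_m\, v_{a-1}\overleftarrow{P}v_{m+1}\, y\, v_a\overrightarrow{P}v_l$. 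The obstacle — the reason this needs more than a one-line splice — is the case in which neither $v_m$ nor $v_{m+1}$ lies in $N_P(y)$: then the reversed arc has no endpoint adjacent to $y$, and a naive splice produces a Hamilton path whose second endpoint is some interior $v_j$ rather than $v_l$. To close this case I would argue instead from the fact that $v_{a-1}\in N_P^-(y)$ (because $a=i_j$ with $j\ge2$), so by Claim~\ref{c1}(ii) it is non-adjacent to every other vertex of the independent set $N_P^-(y)$; and I would show that under the assumed adjacencies one of $v_m,v_{m+1}$ (or one of their $P$-successors/predecessors) is forced into $N_P^-(y)$ or $N_P^+(y)$ — contradicting $v_{a-1}\sim v_m$ or $v_{a-1}\sim v_{m+1}$. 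Making this endpoint-matching argument airtight in every position of $v_m,v_{m+1}$ relative to $v_a$ is the main difficulty.
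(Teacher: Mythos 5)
Your treatment of items (i)--(iv) is correct and is essentially the paper's own argument: (i) by the independence of $\{y\}\cup N_P^{\pm}(y)$ against the $[k+1,2]$ condition, and (ii)--(iv) by exactly the same four path surgeries (your $v_c$ is the paper's $v_{t+1}$, $v_s$, resp.\ $w$). The degenerate positions you flag are indeed harmless.

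Item (v), however, is not proved, and this is a genuine gap rather than a routine verification you postponed. Your own easy cases (where $v_{m+1}\in N_P(y)$) are in fact vacuous --- then $v_m\in N_P^-(y)$ and $v_{a-1}\in N_P^-(y)$, so $v_{a-1}\sim v_m$ already contradicts Claim~\ref{c1}(ii) --- and the remaining case, where neither end of the reversed arc sees $y$, is precisely where all the work lies. Your sketched fix (forcing $v_m$, $v_{m+1}$ or a $P$-neighbour of them into $N_P^{\pm}(y)$) cannot succeed in general: the two chords $v_{a-1}v_m$ and $v_{a-1}v_{m+1}$ impose no constraint on where $y$'s neighbours sit along $P$, so no contradiction with Claim~\ref{c1} is available from them alone. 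The missing idea is to bring $v_{a-2}$ into play and to use the $[k+1,2]$ hypothesis a second time: first one shows $v_{a-2}\not\sim y$, because otherwise inserting $y$ between $v_{a-2}$ and $v_a$ and detouring around $v_{a-1}$ via the two chords $wv_{a-1}$, $w^+v_{a-1}$ (one construction for $w$ left of $v_{a-2}$, one for $w$ right of $v_a$) yields a $(v_0,v_l)$-path longer than $P$. Then $U=\{y,v_{a-2}\}\cup N_P^-(y)$ has $k+1$ vertices and at most one of its $\ge 2$ edges can avoid $v_{a-2}$; since $\{y\}\cup N_P^-(y)$ is independent and $v_{a-2}\not\sim y$, there is some $b\in\{i_2,\dots,i_k\}\setminus\{a\}$ with $v_{a-2}\sim v_{b-1}$. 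Finally one must combine the three edges $v_{a-1}w$, $v_{a-1}w^+$, $v_{a-2}v_{b-1}$ with $yv_a$, $yv_b$ into a path longer than $P$, which requires a case analysis over the six relative positions of $v_b$ with respect to $w$ and $v_a$ (three when $w$ lies right of $v_a$, three when $w$ lies left of $v_{a-3}$). Without this second use of the $[k+1,2]$ condition and the ensuing case analysis, item (v) --- and with it Facts~\ref{f1}--\ref{f4}, which lean on it --- is unsupported.
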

\begin{proof}[Proof of Claim~\ref{c4}]
(i) 
Suppose to the contrary that $i_1\ge 1$. Then $v_{i_1}^-$ exists, and hence $N_P^-(y)\cup \{y\}$ is an independent set of cardinality $k+1$ by Claim~\ref{c1}, a contradiction. This implies $i_1=0$, 
  and similarly, $i_k=l$.

(ii)  
To the contrary, suppose that there exists a vertex, say $v_t$, in $N_{v_{a+2} \overrightarrow{P} v_b}^-(v_{a+1})\cap N(v_{b+1}).$ 
Then $v_0 \overrightarrow{P} v_a y v_b \overleftarrow{P} v_{t+1} v_{a+1} \overrightarrow{P} v_t v_{b+1} \overrightarrow{P} v_l$ is a Hamilton $(v_0,v_l)$-path in $G$, a contradiction.  
By the symmetry of $\overrightarrow{P}$ and $\overleftarrow{P}$,  
we get
$N_{v_{a+1} \overrightarrow{P} v_{b-1}}^-(v_{a-1})\cap N(v_{b-1})=\emptyset$.

(iii)  
Suppose that there exists a vertex, say $v_s$, in $N_{v_a \overrightarrow{P} v_b}^+(v_{a-1})\cap N(v_{b+1}).$ 
Then $$v_0 \overrightarrow{P} v_{a-1} v_{s-1} \overleftarrow{P} v_a y v_b \overleftarrow{P} v_s v_{b+1} \overrightarrow{P} v_l$$ is a Hamilton $(v_0,v_l)$-path in $G$, a contradiction.

(iv)  
Suppose to the contrary that  there exists a vertex $w\in N_{v_b \overrightarrow{P} v_l}^+(v_{a-1})\cap N(v_{b-1}).$ 
Then $$v_0 \overrightarrow{P} v_{a-1} w^- \overleftarrow{P} v_b y v_a \overrightarrow{P} v_{b-1} w \overrightarrow{P} v_l$$ is a Hamilton $(v_0,v_l)$-path in $G$, a contradiction.

(v)  
Suppose to the contrary that there exists a vertex $w\in N_P(v_{a-1})$ 
such that $w^+\sim v_{a-1}.$ 
Then $w\in V(v_0 \overrightarrow{P} v_{a-3}) \cup 
V(v_a  \overrightarrow{P}  v_{l-1}).$
Note that $a\in \{i_2,\ldots, i_k\}.$ 
Then $v_{a-2}$ exists. 
If $v_{a-2}\sim y$, 
then one of the following $(v_0,v_l)$-path is longer than $P$:
\begin{align*}
& v_0 \overrightarrow{P} v_{a-2} y v_a \overrightarrow{P} w v_{a-1} w^+ \overrightarrow{P} v_l \text{~(if $w\in V(v_a \overrightarrow{P} v_{l-1})$)},\\
& v_0 \overrightarrow{P} w v_{a-1} w^+ \overrightarrow{P} v_{a-2} y v_a \overrightarrow{P} v_l \text{~(if $w\in V(v_0 \overrightarrow{P} v_{a-3})$)},
\end{align*}
a contradiction. Therefore $v_{a-2}\not\sim y.$ 

Let $U:=\{y,v_{a-2}\} \cup N_P^-(y).$ Clearly, $|U|=k+1$, then $e(U)\ge 2.$ 
By Claim~\ref{c1}, $\{y\} \cup N_P^-(y)$ is an independent set of $G,$ 
it follows that $e(v_{a-2}, N_P^-(y))\ge 2.$ 
Hence there exists a subscript $b\in\{i_2,\ldots, i_k\} \setminus \{ a\}$ such that $v_{a-2}\sim v_{b-1}.$

If $w\in V(v_a \overrightarrow{P} v_{l-1}),$ 
Since $v_{a-2}\not\sim y$, by Claim~\ref{c1},  $v_b\not\in\{v_{a-2},   v_{a-1}, v_{a+1}, w^+,w^{+2}\}.$   
Now we get one of the following $(v_0,v_l)$-path longer than $P$: 
\begin{align*}
& v_0 \overrightarrow{P} v_{b-1} v_{a-2} \overleftarrow{P} v_b y v_a \overrightarrow{P} w v_{a-1} w^+ \overrightarrow{P} v_l 
\text{~(if  $v_b\in V(v_{i_2} \overrightarrow{P} v_{a-3})$)},\\
& v_0 \overrightarrow{P} v_{a-2} v_{b-1} \overleftarrow{P} v_a y v_b \overrightarrow{P} w v_{a-1} w^+ \overrightarrow{P} v_l \text{~(if $v_b\in V(v_{a+2} \overrightarrow{P} w)$)},\\
& v_0 \overrightarrow{P} v_{a-2} v_{b-1} \overleftarrow{P} w^+ v_{a-1} w \overleftarrow{P} v_a y v_b \overrightarrow{P} v_l \text{~(if $v_b\in V(w^{+3} \overrightarrow{P} v_l)$)}.
\end{align*}
This contradicts the choice of $P.$

Then we assume that $w \in V(v_0 \overrightarrow{P} v_{a-3}).$ 
Recall that $v_{a-2}\not\sim y$. By Claim~\ref{c1}, $v_b\not\in\{w^+, w^{+2},v_{a-2}, \allowbreak  v_{a-1},\allowbreak  v_{a+1} \}$. 
Now we get one of the following $(v_0,v_l)$-path longer than $P$: 
\begin{align*}
	& v_0 \overrightarrow{P} v_{b-1} v_{a-2} \overleftarrow{P} w^+  v_{a-1}  w \overleftarrow{P} v_b y v_a \overrightarrow{P} v_l \text{~(if $v_b\in V(v_{i_2} \overrightarrow{P} w)$)},\\
	& v_0  \overrightarrow{P} w v_{a-1} w^+ \overrightarrow{P} v_{b-1} v_{a-2} 
	\overleftarrow{P} v_b y v_a   \overrightarrow{P} v_l \text{~(if $v_b\in V(w^{+3} \overrightarrow{P} v_{a-3})$)},\\
	& v_0 \overrightarrow{P} w v_{a-1} w^+ \overrightarrow{P} v_{a-2} v_{b-1}  \overleftarrow{P} v_a y v_b \overrightarrow{P} v_l \text{~(if $v_b\in V(v_{a+2} \overrightarrow{P} v_l)$)}.
\end{align*}
This also contradicts the choice of $P,$ which proves (iv). 
The proof of Claim~\ref{c4} is complete.
\end{proof}

Now, we are in a position to present the proof of Theorem~\ref{thm1} for $k\ge 3$.
	Recall that $y$ is the only vertex of $G-V(P)$ and  $N_P(y)=\{v_{i_1},\ldots,v_{i_k}\}.$ 
	We derive the proof into the following two cases.

\begin{case}\label{cs1}
 There exists an integer $j\in \{1,\ldots,k-1\},$ such that $|v_{i_j}^+\overrightarrow{P} v_{i_{j+1}}^-|=1.$
\end{case}

By the symmetry of $\overrightarrow{P}$ and $\overleftarrow{P}$,  
we assume that $1\le j\le k-2,$ and so 
$v_{i_{j+2}}$ exists. 
Now we proceed with the following two facts in this case, which give the structure of the desired graph.
\begin{fact}\label{f1}
  $N(v_{i_j}^+)= N_P(y).$   
\end{fact}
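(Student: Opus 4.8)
The plan is to set $z:=v_{i_j}^+$ and exploit a symmetry between $y$ and $z$. Since we are in Case~\ref{cs1} with the normalisation $1\le j\le k-2$, the hypothesis $|v_{i_j}^+\overrightarrow{P}v_{i_{j+1}}^-|=1$ means $i_{j+1}=i_j+2$, so $z=v_{i_j+1}=v_{i_{j+1}}^-$ is the unique vertex strictly between the consecutive neighbours $v_{i_j},v_{i_{j+1}}$ of $y$ on $P$. The crucial point is that $P':=v_0\overrightarrow{P}v_{i_j}\,y\,v_{i_{j+1}}\overrightarrow{P}v_l$ is again a $(v_0,v_l)$-path with $|V(P')|=|V(P)|$, hence also a longest $(v_0,v_l)$-path, and $G-V(P')=\{z\}$; therefore Claims~\ref{c1}--\ref{c4} apply verbatim with $(z,P')$ in place of $(y,P)$.

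First, $z\not\sim y$: otherwise $z=v_{i_j}^+\in N_P(y)$, contradicting Claim~\ref{c1}(i) for $v_{i_j}\in N_P(y)$. Hence every neighbour of $z$ lies on $P$, so $N(z)=N_P(z)=N_{P'}(z)$ and $|N(z)|=d(z)=k$ by Claim~\ref{c3} applied to $(z,P')$. Applying Claim~\ref{c4}(i) to $(z,P')$ gives $z\sim v_0=v_{i_1}$ and $z\sim v_l=v_{i_k}$; also $z\sim v_{i_j}$ and $z\sim v_{i_{j+1}}$ trivially. Since $|N(z)|=k=|N_P(y)|$, it remains only to prove the inclusion $N(z)\subseteq N_P(y)$.

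Suppose then that $z\sim v_t$ with $v_t\notin N_P(y)$. The vertex $z$ lies in $N_P^+(y)$ (it equals $v_{i_j}^+$) and in $N_P^-(y)$ (it equals $v_{i_{j+1}}^-$), and both of these sets are independent by Claim~\ref{c1}(ii); hence $v_t\notin N_P^+(y)\cup N_P^-(y)$. Now $\{y\}\cup N_P^-(y)$ is an independent set of order $k$ not containing $v_t$ (using Claim~\ref{c1}), so $G[\{y\}\cup N_P^-(y)\cup\{v_t\}]$ has order $k+1$ and exactly $e(v_t,\{y\}\cup N_P^-(y))$ edges; since $G$ is a $[k+1,2]$-graph this number is at least $2$, and as $v_t\sim z\in N_P^-(y)$ while $v_t\not\sim y$, the vertex $v_t$ must have a further neighbour $v_{i_a-1}\in N_P^-(y)$ with $a\ne j+1$. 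Symmetrically, using $\{y\}\cup N_P^+(y)\cup\{v_t\}$, we obtain a neighbour $v_{i_b+1}\in N_P^+(y)$ of $v_t$ with $b\ne j$.

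It remains to reach a contradiction from the adjacencies $z\sim v_t,v_{i_j},v_{i_{j+1}},v_0,v_l$, together with $v_t\sim v_{i_a-1},v_{i_b+1}$ and $y\sim v_{i_j},v_{i_{j+1}},v_{i_a},v_{i_b},v_{i_{j+2}}$ (and $y\sim v_{i_{j-1}}$ if $j\ge 2$). These edges should let one splice the off-path vertices $y$ and $z$, together with a reversal of a segment lying between two of $v_{i_a},v_{i_b},v_{i_j}$, into a Hamilton $(v_0,v_l)$-path — which is longer than $P$ and so contradicts the choice of $P$ — by the same kind of rerouting already used to prove Claim~\ref{c4}. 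I expect this last step to be the main obstacle: because $v_{i_j}^+$ need not sit near the centre of $P$ and the gaps of $N_P(y)$ other than the $j$-th one may exceed $2$, one has to run through the sub-cases for the relative order of $i_a,i_b,i_j$ (and $t$) along $P$ and handle the degenerate positions separately (for instance $v_{i_a}=v_{i_{j-1}}$, or $v_{i_a-1}=v_{i_b+1}$, or $v_t$ lying in the block immediately preceding $v_{i_a}$).
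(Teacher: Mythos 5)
Your preliminary reductions are fine and partly coincide with the paper's: $v_{i_j}^+\not\sim y$ by Claim~\ref{c1}, so all neighbours of $v_{i_j}^+$ lie on $P$; the path $P'=v_0\overrightarrow{P}v_{i_j}\,y\,v_{i_{j+1}}\overrightarrow{P}v_l$ is indeed another longest $(v_0,v_l)$-path with $G-V(P')=\{v_{i_j}^+\}$, so Claims~\ref{c1}--\ref{c4} transfer and give $d(v_{i_j}^+)=k$ and $v_{i_j}^+\sim v_0,v_l$; and your two applications of the $[k+1,2]$-condition to $\{y,v_t\}\cup N_P^-(y)$ and $\{y,v_t\}\cup N_P^+(y)$ are sound. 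But the proof stops exactly where the content of Fact~\ref{f1} lies: the contradiction is only announced (``these edges should let one splice\dots''), not derived. Moreover, this is not a routine case check you have postponed: the adjacencies you collected are attached to the wrong vertex, and they do not suffice in general. Take, say, $k=4$, $j=2$, $x:=v_t$ strictly between $v_{i_3}$ and $v_l$, with your guaranteed chords being $x\sim v_{i_4}^-=v_l^-$ (so $a=4$) and $x\sim v_{i_3}^+$ (so $b=3$), and suppose $i_2\ge 2$ and $x^+\ne v_l^-$. The only edges your argument knows are then the edges of $P$, the $k$ edges at $y$, the edges $v_{i_j}^+v_0$, $v_{i_j}^+v_l$, and $xv_{i_j}^+$, $xv_{i_3}^+$, $xv_l^-$. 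Every interior vertex of $x^+\overrightarrow{P}v_l^-$ meets the rest of the graph only through path edges, so any Hamilton $(v_0,v_l)$-path built from these edges must end with the segment $x\overrightarrow{P}v_l$; after deleting it one needs a Hamilton path of the remainder from $v_0$ to a neighbour $w\in\{x^-,v_{i_j}^+,v_{i_3}^+\}$ of $x$, and a short check shows none exists (the block $v_1\overrightarrow{P}v_{i_2}^-$ must be swept from $v_0$ to $v_{i_2}$, after which only one of $y$ and $v_{i_j}^+$ --- which now have the identical available neighbourhood $\{v_0,v_{i_2},v_{i_3}\}$ --- can be inserted before $v_{i_3}$, while $x^-$ is a dead end). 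So no amount of case analysis on the relative positions of $i_a,i_b,i_j$ closes the argument from your edge set alone.

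The paper's proof circumvents precisely this obstacle by producing a chord at the successor $x^+$ rather than at $x$: it first shows $x^-\not\sim y$ and $x^+\not\sim y$ (because $v_{i_j}^+$ lies in both independent sets $N_P^+(y)$ and $N_P^-(y)$ and is adjacent to $x$), then applies the $[k+1,2]$-condition to $U=\{y,x^+\}\cup N_P^+(y)$ to obtain $x^+\sim v_{i_t}^+$ with $v_{i_t}^+\ne v_{i_j}^+$ (Claim~\ref{c4}(v) excludes $v_{i_j}^+$), uses Claim~\ref{c4}(iii) to force $v_{i_t}$ to lie to the left of $x^+$, and then exhibits an explicit Hamilton $(v_0,v_l)$-path in each of the two remaining positions of $v_{i_t}$. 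A chord incident with $x^+$ is exactly what allows the stretch of $P$ beyond $x$ to be re-entered after the splice; your chords at $x$ itself cannot play this role. To repair your proof you should imitate the paper's step: after fixing $x\in N(v_{i_j}^+)\setminus N_P(y)$ (with the same symmetry reduction), apply the $[k+1,2]$-condition to the set containing $x^+$, not $x$, and then carry out the rerouting case by case.
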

\begin{proof}[Proof of Fact~\ref{f1}]
By Claim~\ref{c1}, $N(v_{i_j}^+)\subseteq V(P).$ 
We assert that $N(v_{i_j}^+)\subseteq N_P(y).$ 
Suppose to the contrary that $N(v_{i_j}^+)\not\subseteq N_P(y).$ 
There exists a vertex $x\in N(v_{i_j}^+) \setminus N_P(y).$ 
By the symmetry of $\overrightarrow{P}$ and $\overleftarrow{P}$,   
we may assume that $x\in V(v_{i_{j+1}}^+\overrightarrow{P} v_l).$ 
Since $v_l \in N_P(y),$ we have $x\neq v_l$ and so $x^+$ exists. 
Note that $|v_{i_j}^+\overrightarrow{P} v_{i_{j+1}}^-|=1.$ 
By Claim~\ref{c1}, $x^-\not\sim y$ and $x^+\not\sim y.$ 
Let $U:=\{y,x^+\}\cup N_P^+(y).$ 
One sees $|U|=k+1.$ 
Hence, $e(U)\ge 2.$ 
This implies that $e(x^+, N_P^+(y))\ge 2.$  
Hence, there exists a vertex $v_{i_t}\in N_P (y)$ such that $x^+\sim v_{i_t}^+$. 
Since $v_{i_j}^+ = v_{i_{j+1}}^-,$
from Claim~\ref{c4}(v), 
$v_{i_j}^+\not\sim x^+.$ 
By Claim~\ref{c4}(iii), 
$v_{i_t}\not\in V(x^+ \overrightarrow{P} v_l).$ 
If 
$v_{i_t} \in V(v_{i_{j+1}} \overrightarrow{P} x^{-2}),$ then 
$$v_0 \overrightarrow{P} v_{i_j} y v_{i_t} \overleftarrow{P} v_{i_j}^+ x \overleftarrow{P} v_{i_t}^+ x^+ \overrightarrow{P} v_l$$ 
is a Hamilton $(v_0,v_l)$-path in $G$, a contradiction. 
Thus  
$v_{i_t} \in V(v_0 \overrightarrow{P} v_{i_j}),$ 
which gives a Hamilton $(v_0,v_l)$-path in $G$, that is, 
$$v_0 \overrightarrow{P} v_{i_t} y v_{i_{j+1}} \overrightarrow{P} x v_{i_j}^+ \overleftarrow{P} v_{i_t}^+ x^+ \overrightarrow{P} v_l,$$ a contradiction. 
So we do indeed have $N(v_{i_j}^+)\subseteq N_P(y).$ 
Since $G$ is $k$-connected and $|N_P(y)|=k$, we have $N(v_{i_j}^+)=N_P(y).$
This proves Fact~\ref{f1}.
\end{proof}

\begin{fact}\label{f2}
$|v_{i_{j+1}}^+\overrightarrow{P} v_{i_{j+2}}^-|=1.$  
\end{fact}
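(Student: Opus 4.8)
\textbf{Proof proposal for Fact~\ref{f2}.}
The plan is to argue by contradiction: suppose $|v_{i_{j+1}}^+\overrightarrow{P} v_{i_{j+2}}^-|\ge 2$, so that the segment of $P$ strictly between the consecutive $y$-neighbors $v_{i_{j+1}}$ and $v_{i_{j+2}}$ contains at least two internal vertices. The guiding intuition is that Fact~\ref{f1} has just pinned down $N(v_{i_j}^+)=N_P(y)$ exactly, i.e.\ the ``gap vertex'' $v_{i_j}^+=v_{i_{j+1}}^-$ has its entire neighborhood equal to the $y$-neighbors; I want to show the next gap, between $v_{i_{j+1}}$ and $v_{i_{j+2}}$, is forced to collapse to a single vertex as well, which is exactly the inductive engine that will eventually make all gaps have size one and produce the $kK_1\vee G_k$ structure.

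First I would set up the natural independent-set/$[k+1,2]$ counting argument, mirroring the proof of Fact~\ref{f1}. Take $w:=v_{i_{j+1}}^+$ (the first vertex after $v_{i_{j+1}}$) and note that under the contradiction hypothesis $w\neq v_{i_{j+2}}^-$, so $w^+$ also lies strictly inside the gap and is distinct from all $y$-neighbors. By Claim~\ref{c1}(i), neither $w$ nor $w^+$ is adjacent to $y$. Form $U:=\{y,w^+\}\cup N_P^+(y)$ (or the $N_P^-$ variant, whichever the path-rerouting lemmas in Claim~\ref{c4} are set up to exploit); since $|N_P(y)|=k$ by Claim~\ref{c3} and $N_P^+(y)$ is independent and avoids $y$ by Claim~\ref{c1}(ii), we get $|U|=k+1$ and hence $e(U)\ge 2$ from the $[k+1,2]$ hypothesis. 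As $\{y\}\cup N_P^+(y)$ is independent, both of these edges must be incident to $w^+$, so $w^+$ is adjacent to at least two vertices of $N_P^+(y)$ — equivalently there are $v_{i_s},v_{i_t}\in N_P(y)$ with $w^+\sim v_{i_s}^+$ and $w^+\sim v_{i_t}^+$.

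Next I would run the standard case analysis on the positions of $v_{i_s},v_{i_t}$ along $P$ relative to $w$, using Claim~\ref{c4}(iii)--(v) and Fact~\ref{f1} to eliminate each configuration. The key structural input is Fact~\ref{f1}: since $N(v_{i_j}^+)=N_P(y)$ and $w=v_{i_{j+1}}^+$ sits just past $v_{i_{j+1}}=v_{i_j}^{+2}$, the vertex $v_{i_j}^+$ is not adjacent to $w^+$, which (as in the proof of Fact~\ref{f1}) restricts where $w^+$'s other neighbors can be via Claim~\ref{c4}(iii). For each admissible location of $v_{i_s}$ (in $V(v_0\overrightarrow{P}v_{i_j})$, in the interior segment $V(v_{i_{j+1}}\overrightarrow{P}w)$, or further along toward $v_l$) I would exhibit an explicit rerouted $(v_0,v_l)$-path that uses the chord $w^+v_{i_s}^+$, the vertex $y$ inserted through a consecutive pair $v_{i_r}v_{i_r}^+$, and a reversal of the appropriate sub-path — exactly the style of the displayed paths in Claim~\ref{c4} and Fact~\ref{f1} — and check it spans $V(P)\cup\{y\}$, contradicting that $P$ is a longest $(v_0,v_l)$-path. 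The main obstacle I anticipate is bookkeeping: making sure every rerouted path is genuinely a path (no repeated vertices, segment orientations consistent) across all the sub-cases, and confirming that the degenerate positions forced by $|v_{i_j}^+\overrightarrow{P}v_{i_{j+1}}^-|=1$ (so $v_{i_j}^+=v_{i_{j+1}}^-$) are handled — in particular ensuring $w^+$ is not accidentally equal to some $v_{i_t}$ or $v_{i_t}^+$ we need distinct. Once every configuration yields a longer $(v_0,v_l)$-path or a violation of an earlier claim, the contradiction gives $|v_{i_{j+1}}^+\overrightarrow{P}v_{i_{j+2}}^-|=1$, completing Fact~\ref{f2}.
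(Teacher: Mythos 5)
Your overall strategy is the same as the paper's (argue by contradiction, run a $[k+1,2]$ count on a set of the form $\{y,\text{probe}\}\cup N_P^+(y)$, then reroute into a Hamilton $(v_0,v_l)$-path using the Fact~\ref{f1} edges $v_{i_j}^+\sim v_{i_t}$), but your choice of probe vertex is the wrong one, and the plan breaks precisely at the step you defer to ``bookkeeping.'' The paper probes with $v_{i_{j+2}}^-$, the \emph{last} vertex of the gap; you probe with $w^+=v_{i_{j+1}}^{+2}$, its \emph{second} vertex. A first, minor slip: since $j+1\le k-1$, the vertex $w=v_{i_{j+1}}^+$ already lies in $N_P^+(y)$ and is path-adjacent to $w^+$, so your set $U$ contains the edge $ww^+$ for free; hence $e(U)\ge 2$ only guarantees \emph{one} nontrivial chord $w^+\sim v_{i_t}^+$ with $t\ne j+1$, not two (the paper makes exactly this adjustment by noting ``one of $\{s,t\}$ is not $j+1$'').

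The decisive problem is the rerouting. With the chord landing at $w^+$ in the middle of the gap, the promised Hamilton paths do not exist once the gap $v_{i_{j+1}}^+\overrightarrow{P}v_{i_{j+2}}^-$ has three or more vertices: any $(v_0,v_l)$-path that inserts $y$ between two consecutive $y$-neighbors and uses the chord $w^+v_{i_t}^+$ together with $v_{i_j}^+\sim v_{i_t}$ strands either the single vertex $w$ (e.g.\ $v_0\overrightarrow{P}v_{i_t}\,y\,v_{i_{j+1}}\overleftarrow{P}v_{i_t}^+\,w^+\overrightarrow{P}v_l$ misses only $w$) or the tail segment $w^{+2}\overrightarrow{P}v_{i_{j+2}}^-$ (e.g.\ $v_0\overrightarrow{P}v_{i_t}\,v_{i_j}^+\overleftarrow{P}v_{i_t}^+\,w^+\,w\,v_{i_{j+1}}\,y\,v_{i_{j+2}}\overrightarrow{P}v_l$). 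The paper's probe works for a structural reason: $v_{i_{j+2}}^-$ is path-adjacent to the next $y$-neighbor $v_{i_{j+2}}$, so after jumping to it one can sweep the whole gap back to $v_{i_{j+1}}$, insert $y$, and continue from $v_{i_{j+2}}$ leaving nothing behind; compare the paper's paths $v_0\overrightarrow{P}v_{i_t}v_{i_j}^+\overleftarrow{P}v_{i_t}^+v_{i_{j+2}}^-\overleftarrow{P}v_{i_{j+1}}yv_{i_{j+2}}\overrightarrow{P}v_l$ (for $t<j$) and $v_0\overrightarrow{P}v_{i_j}yv_{i_{j+2}}\overrightarrow{P}v_{i_t}v_{i_j}^+\overrightarrow{P}v_{i_{j+2}}^-v_{i_t}^+\overrightarrow{P}v_l$ (for $t\ge j+2$). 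As written, your argument only closes the case where the gap has exactly two vertices (then $w^+=v_{i_{j+2}}^-$ and your plan coincides with the paper's); to repair it, replace the probe $w^+$ by $v_{i_{j+2}}^-$ throughout.
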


\begin{proof}[Proof of Fact~\ref{f2}]
Suppose to the contrary that $|v_{i_{j+1}}^+\overrightarrow{P} v_{i_{j+2}}^-|\ge 2.$ 
Let $X:=\{y, v_{i_{j+2}}^-\}\cup N_P^+(y).$ 
Since $|X|=k+1$, $e(X)\ge 2.$ 
By Claim~\ref{c1}, $e(v_{i_{j+2}}^-, N_P^+(y))\ge 2.$
Then there exist two integer $s$ and $t$, such that $v_{i_{j+2}}^- \sim v_{i_s}^+$ and $v_{i_{j+2}}^- \sim v_{i_t}^+.$ 
Clearly, one of $\{s,t\}$ is not $j+1.$ 
Without loss of generality, we may assume that $t\neq j+1.$ 
If $t<j$, by Fact~\ref{f1}, 
$v_{i_j}^+\sim v_{i_t}$, and hence 
$$v_0 \overrightarrow{P} v_{i_t} v_{i_j}^+ \overleftarrow{P} v_{i_t}^+ v_{i_{j+2}}^- \overleftarrow{P} v_{i_{j+1}} y v_{i_{j+2}} \overrightarrow{P} v_l$$ 
is a Hamilton $(v_0,v_l)$-path in $G$, a contradiction. 
If $t=j,$ by Fact~\ref{f1}, $v_{i_j}^+\sim v_{i_{j+2}},$ 
which contradicts Claim~\ref{c4}(v). 
Thus, we assume that $t\ge j+2.$ 
From Fact~\ref{f1}, 
$v_{i_j}^+\sim v_{i_t}$,  
and so 
$$v_0 \overrightarrow{P} v_{i_j} y v_{i_{j+2}} \overrightarrow{P} v_{i_t} v_{i_j}^+ \overrightarrow{P} v_{i_{j+2}}^- v_{i_t}^+ \overrightarrow{P} v_l$$  
is a Hamilton $(v_0,v_l)$-path in $G,$ a contradiction. 
This proves Fact~\ref{f2}.
\end{proof}

By the symmetry of $\overrightarrow{P}$ and $\overleftarrow{P}$, combine with Facts~\ref{f1} and \ref{f2}, 
one has $|v_{i_p}^+\overrightarrow{P} v_{i_{p+1}}^-|=1$ 
and $N(v_{i_p}^+)= N_P(y)$ 
for each $p\in \{1,\ldots,k-1\}.$ 
It means that $G= kK_1\vee G_{k},$ where $G_{k}$ is an arbitrary graph of order $k$, as desired.

\begin{case}
$\min\{|v_{i_j}^+\overrightarrow{P} v_{i_{j+1}}^-| : 1\le j\le k -1 \}  \ge 2.$
\end{case}

For the longest $(u,v)$-path $P$ in $G$, 
we denote 
$$\rho(P):=  \max\{|v_{i_j}^+\overrightarrow{P} v_{i_{j+1}}^-| : 1\le j\le k-1  \}.$$
Choose a longest $(u,v)$-path $P'$  such that 
$\rho(P')$ is as large as possible.  
Then Claims~\ref{c1}-\ref{c4} also hold for $P'.$  
In what follows, we denote $P'=u_0 u_1\ldots u_l$, where $u=u_0$ and $v=u_l.$ 
Let $z$ be the only vertex of $G-V(P')$  and write $N_{P'}(z)=\{u_{i_1},\ldots,u_{i_k}\},$ with $0\le i_1<\cdots<i_k\le l$.  
We assume that  $\rho(P')= |u_{i_q}^+\overrightarrow{P'} u_{i_{q+1}}^-|.$ 
Since  $\min\{|u_{i_j}^+\overrightarrow{P'} u_{i_{j+1}}^-| : 1\le j\le k-1  \} =1$  has been treated in Case~\ref{cs1}, 
we assume that 
$\min\{|u_{i_j}^+\overrightarrow{P'} u_{i_{j+1}}^-| : 1 \le j\le k-1  \} \ge 2.$ 

By the symmetry of $\overrightarrow{P'}$ and $\overleftarrow{P'}$,   we may assume that $2\le q\le k-1$,  
so both $u_{i_{q}}^-$ and $u_{i_{q}}^{-2}$ exist.
\begin{fact}\label{f3}
		$N_{ u_{i_q}^+\overrightarrow{P'} u_{i_{q+1}}  }  (u_{i_{q}}^- ) =\emptyset$.
\end{fact}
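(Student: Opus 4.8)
The plan is to prove Fact~\ref{f3} by contradiction: suppose some vertex $u_t$ lies in $N_{u_{i_q}^+\overrightarrow{P'}u_{i_{q+1}}}(u_{i_q}^-)$, i.e.\ $u_t$ is an internal vertex of the segment strictly between $u_{i_q}$ and $u_{i_{q+1}}$ (possibly equal to $u_{i_q}^+$ or $u_{i_{q+1}}^-$, though one should check the endpoint cases against Claim~\ref{c1} and Claim~\ref{c4}(v)), and $u_t\sim u_{i_q}^-$. The idea is that such a chord lets us reroute the path $P'$ into a new longest $(u_0,u_l)$-path $P''$ whose parameter $\rho(P'')$ is strictly larger than $\rho(P')$, contradicting the extremal choice of $P'$. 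Concretely, since $u_{i_{q-1}}\sim z$ and $z\sim u_{i_q}$, and $u_t\sim u_{i_q}^-$, we can form
\[
u_0\overrightarrow{P'}u_{i_{q-1}} z\, u_{i_q}\overrightarrow{P'} u_t\, u_{i_q}^-\overleftarrow{P'} u_{i_{q-1}}^+ u_t^+ \overrightarrow{P'} u_l,
\]
or a similar reversal, which is again a Hamilton $(u_0,u_l)$-path — hence a longest $(u_0,u_l)$-path — on the same vertex set, but in which the vertex $z$ now has a neighbour inside what used to be a different gap, thereby enlarging the maximal gap length. One has to verify that the rerouting is a valid path (the arcs are vertex-disjoint and concatenate properly) and that it genuinely increases $\rho$; the latter is where the bookkeeping on which $u_{i_j}$ remain neighbours of $z$ along $P''$ must be done carefully.

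The key steps, in order, are: (1) assume the contrary and name the offending chord $u_t u_{i_q}^-$ with $u_t$ in the open segment $(u_{i_q},u_{i_{q+1}})$; (2) handle the degenerate endpoint positions of $u_t$ separately — if $u_t=u_{i_q}^+$ then $u_{i_q}^-\sim u_{i_q}^+$ contradicts Claim~\ref{c4}(v) applied with $a=i_q$, and if $u_t=u_{i_{q+1}}^-$ one gets a short contradiction from Claim~\ref{c1} since $u_{i_q}^-$ would be adjacent to a near-neighbour of $u_{i_{q+1}}\in N_{P'}(z)$; (3) in the generic case, exhibit the explicit rerouted $(u_0,u_l)$-path $P''$ above, check it visits every vertex exactly once (so it is a Hamilton, hence longest, $(u_0,u_l)$-path); (4) read off the neighbours of $z$ along $P''$ — they are again the same $k$ vertices of $V(P')$, now sitting at shifted positions — and compute $\rho(P'')$, showing the gap that used to be $[u_{i_{q-1}},u_{i_q}]$ and the reversed portion now merge (or lengthen) to give a strictly larger maximum gap than $\rho(P')=|u_{i_q}^+\overrightarrow{P'}u_{i_{q+1}}^-|$; (5) conclude the contradiction with the choice of $P'$.

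I expect the main obstacle to be step (4): verifying that the rerouting actually increases $\rho$ rather than merely permuting the gap sizes. Because $z$ stays adjacent to the same underlying vertices of $G$, the multiset of gap lengths along $P''$ is a rearrangement determined by where those vertices land after the reversal of the arc $u_{i_q}^-\overleftarrow{P'}u_{i_{q-1}}^+$; one must argue that concatenating the (nonempty, by the Case~2 hypothesis $\min\ge 2$) gaps on either side of $u_{i_q}$ forces some new gap of length at least $\rho(P')+1$. A secondary technical nuisance is making sure, across all positions of $u_t$, that the various path concatenations are legitimate — that no arc is traversed twice and the orientation arrows match at the splice points — which is the same style of casework already carried out in the proof of Claim~\ref{c4}(v), so it should go through by the same bookkeeping, just with $z$ and $P'$ in place of $y$ and $P$. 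If a single rerouting does not suffice, the fallback is to split into subcases by whether $u_t$ lies in the first or second half of the gap $(u_{i_q},u_{i_{q+1}})$ and use the mirrored reroute $u_0\overrightarrow{P'}u_{i_{q-1}}z\,u_{i_q}^-\overleftarrow{P'}\cdots$ in the other orientation, exactly as Claim~\ref{c4}(ii)–(v) handle their symmetric alternatives.
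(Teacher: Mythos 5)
Your proposal has a genuine gap, and the core construction does not work. The rerouted path you write down,
\[
u_0\overrightarrow{P'}u_{i_{q-1}} z\, u_{i_q}\overrightarrow{P'} u_t\, u_{i_q}^-\overleftarrow{P'} u_{i_{q-1}}^+ u_t^+ \overrightarrow{P'} u_l,
\]
splices $u_{i_{q-1}}^+$ to $u_t^+$, and that edge is nowhere available: the only edges you may use beyond $P'$ are $u_{i_{q-1}}z$, $zu_{i_q}$ and the assumed chord $u_t u_{i_q}^-$. Note also that if such a path did exist it would traverse all of $V(P')\cup\{z\}$, i.e.\ it would be a Hamilton $(u_0,u_l)$-path and the contradiction would be immediate --- so your plan of then invoking maximality of $\rho$ is built on an inconsistent picture. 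A reroute that uses only the legitimate edges, such as $u_0\overrightarrow{P'}u_{i_q}^- u_t\overleftarrow{P'}u_{i_q} z u_{i_{q+1}}\overrightarrow{P'}u_l$, omits the sub-segment $u_t^+\overrightarrow{P'}u_{i_{q+1}}^-$ and is in general \emph{shorter} than $P'$, so neither the longest-path property nor the $\rho$-maximality is violated; this is exactly why the chord alone yields no contradiction and why the fact needs a different argument. You also miss the case $u_t=u_{i_{q+1}}$: the statement's segment is $u_{i_q}^+\overrightarrow{P'}u_{i_{q+1}}$ including its right endpoint, while you restrict $u_t$ to lie strictly between $u_{i_q}$ and $u_{i_{q+1}}$, and the adjacency $u_{i_q}^-\sim u_{i_{q+1}}$ cannot be dismissed by Claim~\ref{c1} or Claim~\ref{c4}(v).

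For comparison, the paper does not use the extremal choice of $P'$ (maximality of $\rho$) in Fact~\ref{f3} at all; that machinery only enters in Facts~\ref{f4} and \ref{f5}. Instead it argues by counting edges in $(k+1)$-sets: assuming a chord $x u_{i_q}^-$ with $x$ strictly inside the gap, Claims~\ref{c4}(ii), (iv), (v) force $x^-$ to be nonadjacent to every vertex of $N_{P'}^-(z)$, so the set $\{z,x^-\}\cup N_{P'}^-(z)$ of size $k+1$ would span no edge, contradicting the $[k+1,2]$ hypothesis; the residual case $u_{i_q}^-\sim u_{i_{q+1}}$ is then excluded by a second counting argument on $\{z,u_{i_{q+1}}^-\}\cup N_{P'}^+(z)$ together with two explicit Hamilton-path constructions that do use the chord $u_{i_q}^-u_{i_{q+1}}$ plus a second forced edge. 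Your proposal lacks this use of the $[k+1,2]$ condition to manufacture the extra edge needed for any valid rerouting, and without it the argument cannot be completed.
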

\begin{proof}[Proof of Fact~\ref{f3}]
We first show that  $N_{ u_{i_q}^+\overrightarrow{P'} u_{i_{q+1}}^-  }  (u_{i_{q}}^- ) =\emptyset$.	 
Suppose to the contrary that there exists a vertex $x\in N_{ u_{i_q}^+\overrightarrow{P'} u_{i_{q+1}}^-  }  (u_{i_{q}}^- )$. 
By Claim~\ref{c1} and Claim~\ref{c4}(v), $x\not\in\{  u_{i_q}^+,  u_{i_{q+1}}^-\}$. 
Setting $U=\{ z, x^- \}\cup N_{P'}^-(z).$ By Claim~\ref{c4}(v), 
$x^-\not\sim u_{i_q}^-.$ 
Combining with Claim~\ref{c4}(ii) and (iv) gives that $x^-$ is nonadjacent to any vertex of $ N_{P'}^-(z).$ 
Therefore, $e(U)=0$, which contradicts the fact that $G$ is a $[k+1,2]$-graph. 

We then prove that $u_{i_{q}}^-\not\sim  u_{i_{q+1}}$. 
Otherwise, $u_{i_{q}}^-\sim  u_{i_{q+1}},$ then we write $W:= \{z, u_{i_{q+1}}^- \}\cup  N_{P'}^+(z)$. 
Suppose that there exists a vertex $w\in N_{P'}^+(z)\setminus\{u_{i_q}^+\}$ such that  $u_{i_{q+1}}^- \sim w.$ 
If $w\in V(u_0 \overrightarrow{P'} u_{i_q}^{-2})$, 
then 
$$u_0 \overrightarrow{P'} w^- z u_{i_q} \overrightarrow{P'} u_{i_{q+1}}^- w \overrightarrow{P'} u_{i_q}^- u_{i_{q+1}} \overrightarrow{P'} u_l $$ 
is a Hamilton $(u_0,u_l)$-path in $G$, a contradiction. 
Thus we suppose that 
$w\in V(v_{i_{q+1}}^+ \overrightarrow{P'} u_{l})$. 
Then  
$$u_0 \overrightarrow{P'} u_{i_q}^- u_{i_{q+1}} \overrightarrow{P'} w^- z u_{i_q} \overrightarrow{P'} u_{i_{q+1}}^- w 
\overrightarrow{P'} u_l $$ 
is a Hamilton $(u_0,u_l)$-path in $G$, a contradiction.  
This implies that $u_{i_{q+1}}^-$ is nonadjacent to any vertex of $N_{P'}^+(z)\setminus\{u_{i_q}^+\}.$ 
Combine with Claim~\ref{c1}, we have $e(W)\le 1$, which contradicts the fact that $G$ is a $[k+1,2]$-graph. 
This proves Fact~\ref{f3}.
\end{proof} 
\begin{fact}\label{f4}
\begin{wst}
\item[{\rm (i)}] 
	$ u_{i_q}^{-2}$	is nonadjacent to any vertex of $N_{P'}^-(z)\setminus \{u_{i_q}^{-}, u_{i_{q+1}}^{-}\}$;
\item[{\rm (ii)}] $ u_{i_q}^{-2}\sim u_{i_{q+1}}^{-}$.
\end{wst}
\end{fact}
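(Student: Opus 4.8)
The plan is to extract both parts from a single $(k{+}1)$-set together with one rerouting move. I would first set $U:=\{z,u_{i_q}^{-2}\}\cup N_{P'}^-(z)$. Since $q\ge2$ and, in the case at hand, every gap of $P'$ has size at least $2$, the index $i_q-2$ lies strictly between the consecutive neighbour-indices $i_{q-1}$ and $i_q$ of $z$; hence $u_{i_q}^{-2}$ exists, $u_{i_q}^{-2}\notin N_{P'}(z)$, and $u_{i_q}^{-2}$ is distinct from $z$ and from every vertex of $N_{P'}^-(z)$. Thus $|U|=k+1$, so $e(U)\ge2$. As $\{z\}\cup N_{P'}^-(z)$ is independent by Claim~\ref{c1} and $z\not\sim u_{i_q}^{-2}$, every edge of $G[U]$ joins $u_{i_q}^{-2}$ to $N_{P'}^-(z)$; one of them is the path-edge $u_{i_q}^{-2}u_{i_q}^-$ with $u_{i_q}^-=(u_{i_q})^-\in N_{P'}^-(z)$, so there must be a second, say $u_{i_q}^{-2}\sim u_{i_s}^-$ with $s\in\{2,\dots,k\}\setminus\{q\}$. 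Part~(ii) will then follow as soon as part~(i) forces $s=q+1$.

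For part~(i) I would fix any $s\in\{2,\dots,k\}\setminus\{q,q+1\}$, assume $u_{i_q}^{-2}\sim u_{i_s}^-$, and reach a contradiction by rerouting. Using that edge together with $zu_{i_q}$ and $zu_{i_s}$, I would take $P'':=u_0\overrightarrow{P'}u_{i_q}^{-2}\,u_{i_s}^-\overleftarrow{P'}u_{i_q}\,z\,u_{i_s}\overrightarrow{P'}u_l$ when $s\ge q+2$, and the mirror path $P'':=u_0\overrightarrow{P'}u_{i_s}^-\,u_{i_q}^{-2}\overleftarrow{P'}u_{i_s}\,z\,u_{i_q}\overrightarrow{P'}u_l$ when $s\le q-1$. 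In both cases $V(P'')=(V(P')\setminus\{u_{i_q-1}\})\cup\{z\}$, so $P''$ is again a longest $(u,v)$-path, and Claims~\ref{c1}--\ref{c4} apply to it (they were proved for an arbitrary longest $(u,v)$-path), now with omitted vertex $u_{i_q}^-=u_{i_q-1}$. The crucial observation is that by Fact~\ref{f3} the vertex $u_{i_q-1}$ has no neighbour among $u_{i_q+1},\dots,u_{i_{q+1}}$, a block of consecutive $P'$-vertices of length $\rho(P')+1$ once $u_{i_q}$ (a genuine neighbour of $u_{i_q-1}$) is counted as an endpoint; this block reappears as a block of consecutive vertices of $P''$, flanked on its far side by a further $P''$-neighbour of $u_{i_q-1}$ — supplied by $u_{i_q}^{-2}$ in the first construction, and by $u_l$ (adjacent to $u_{i_q-1}$ by Claim~\ref{c4}(i) applied to $P''$) in the second. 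Reading off the corresponding gap of $N_{P''}(u_{i_q-1})$ yields a gap of length at least $i_{q+1}-i_q=\rho(P')+1$, contradicting the choice of $P'$ as a longest $(u,v)$-path with $\rho$ maximum. (When $q=k-1$ the second construction is even quicker: there $u_{i_{q+1}}=u_l$, so Claim~\ref{c4}(i) for $P''$ forces $u_{i_q-1}\sim u_l$, against Fact~\ref{f3}.) This would establish (i); and then the second neighbour produced in the first paragraph must be $u_{i_{q+1}}^-$, which is (ii).

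I expect the only delicate point to be the position bookkeeping on $P''$: after the reversal one must check that $u_{i_q}$ and the block $u_{i_q+1},\dots,u_{i_{q+1}}$ land on $P''$ as claimed, with $u_{i_q}^{-2}$, resp.\ $u_l$, on the other side, so that this block genuinely becomes a gap of $N_{P''}(u_{i_q-1})$ one longer than $\rho(P')$; this is routine but has to be done case by case, and it is where Claim~\ref{c4}(i) for $P''$ enters in the second construction. Beyond that, the argument uses only Fact~\ref{f3} and the extremal choice of $P'$.
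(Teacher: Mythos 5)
Your proposal is correct and follows essentially the same route as the paper: part (ii) via the $(k{+}1)$-set $\{z,u_{i_q}^{-2}\}\cup N_{P'}^-(z)$ together with (i) and Claim~\ref{c1}, and part (i) via exactly the two reroutings $Q$, $Q'$ that replace $u_{i_q}^-$ by $z$ and then contradict the maximality of $\rho(P')$ using Fact~\ref{f3}. Your extra bookkeeping (locating flanking neighbours of $u_{i_q}^-$ on the new path, invoking Claim~\ref{c4}(i) for $P''$, and the separate remark for $q=k-1$) only makes explicit what the paper leaves implicit; it is not a different argument.
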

\begin{proof}[Proof of Fact~\ref{f4}]
(i)  
	Suppose first that there exists an integer $a$ such that $a\le q-1$ 
	and $u_{i_q}^{-2}\sim u_{i_a}^-$. 
	Then let 
	$$Q:=u_0 \overrightarrow{P'} u_{i_a}^- u_{i_q}^{-2}  \overleftarrow{P'}u_{i_a} z u_{i_q} \overrightarrow{P'} u_l.$$
	Clearly, $Q$ is another longest $(u_0,u_l)$-path in $G$ and $u_{i_{q}}^-\not\in V(Q)$.   
	From Fact~\ref{f3}, 
	$u_{i_{q}}^-$ is nonadjacent to any vertex of $V(u_{i_q}^+\overrightarrow{P'} u_{i_{q+1}} )$, this implies that 
	\begin{align*}
		\rho (Q)\ge |u_{i_q}^+\overrightarrow{P'} u_{i_{q+1}}|=|u_{i_q}^+\overrightarrow{P'} u_{i_{q+1}}^-|+1=\rho(P')+1,
	\end{align*}
	which contradicts the choice of $P'$.
	
	Now suppose that there exists an integer $b$ such that $b\ge q+2$  
	and $u_{i_q}^{-2}\sim u_{i_b}^-$.  
	We write 
	$$Q':=u_0 \overrightarrow{P'} u_{i_q}^{-2} u_{i_b}^{-}  \overleftarrow{P'}u_{i_q} z u_{i_b} \overrightarrow{P'} u_l.$$
	Similarly, $Q'$ is another longest $(u_0,u_l)$-path in $G$ and $u_{i_{q}}^-\not\in V(Q')$.   
	Again by Fact~\ref{f3}, 
	\begin{align*}
		\rho (Q')\ge |u_{i_q}^+\overrightarrow{P'} u_{i_{q+1}}|=|u_{i_q}^+\overrightarrow{P'} u_{i_{q+1}}^-|+1=\rho(P')+1,
	\end{align*}
	which contradicts the choice of $P'$. This proves (i).
	
(ii) 
	Let $U:= \{z,  u_{i_q}^{-2}\} \cup N_{P'}^-(z).$ 
	Since $G$ is a $[k+1,2]$-graph, together with (i) and Claim~\ref{c1}, we have
	$ u_{i_q}^{-2}\sim u_{i_{q+1}}^{-}$.
	This completes the proof of Fact~\ref{f4}.
\end{proof}
\begin{fact}\label{f5}
	There exists an integer $p\in \{2,\ldots, k\}\setminus \{q, q+1\}$, 
	such that $u_{i_{q+1}}^{-2} \sim u_{i_p}^-$. 
	Hence $u_{i_q}^{-}\sim u_{i_p}^{-2}$.
\end{fact}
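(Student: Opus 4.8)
The plan is to prove the two assertions of Fact~\ref{f5} in turn. The existence of $p$ will come from a single application of the $[k+1,2]$-condition to a set of order $k+1$; the adjacency $u_{i_q}^-\sim u_{i_p}^{-2}$ will then be forced, for any such $p$, by the maximality of $\rho(P')$.

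First, for the existence of $p$, I would take $U:=\{z,\,u_{i_{q+1}}^{-2}\}\cup N_{P'}^-(z)$. The Case~2 hypothesis gives $i_{q+1}-i_q\ge 3$, so $u_{i_{q+1}}^{-2}$ lies strictly between $u_{i_q}$ and $u_{i_{q+1}}$ on $P'$; in particular $u_{i_{q+1}}^{-2}\notin N_{P'}(z)$ and $u_{i_{q+1}}^{-2}\notin N_{P'}^-(z)$, so $|U|=k+1$ and hence $e(U)\ge 2$. By Claim~\ref{c1}, $\{z\}\cup N_{P'}^-(z)$ is independent, and $z\not\sim u_{i_{q+1}}^{-2}$; therefore $e(u_{i_{q+1}}^{-2},N_{P'}^-(z))\ge 2$. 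One of these (at least two) neighbours is $u_{i_{q+1}}^-$, via a $P'$-edge, and another cannot be $u_{i_q}^-$, since $u_{i_{q+1}}^{-2}\in V(u_{i_q}^+\overrightarrow{P'}u_{i_{q+1}})$ and Fact~\ref{f3} forbids $u_{i_q}^-$ from having a neighbour in that segment. Hence some $p\in\{2,\ldots,k\}\setminus\{q,q+1\}$ satisfies $u_{i_{q+1}}^{-2}\sim u_{i_p}^-$.

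For the adjacency, I would argue by contradiction, assuming $u_{i_q}^-\not\sim u_{i_p}^{-2}$ (note $i_p\ge i_2\ge 3$, so $u_{i_p}^{-2}$ is well-defined). Using the two chords $u_{i_q}^{-2}\sim u_{i_{q+1}}^-$ (Fact~\ref{f4}(ii)) and $u_{i_{q+1}}^{-2}\sim u_{i_p}^-$, together with the edges $zu_{i_q}$ and $zu_{i_p}$, I would build a longest $(u,v)$-path $R$ whose unique omitted vertex is $u_{i_q}^-$; when $2\le p\le q-1$ one may take
$$R:=u_0\overrightarrow{P'}u_{i_p}^-\,u_{i_{q+1}}^{-2}\overleftarrow{P'}u_{i_q}\,z\,u_{i_p}\overrightarrow{P'}u_{i_q}^{-2}\,u_{i_{q+1}}^-\overrightarrow{P'}u_l,$$
and when $q+2\le p\le k$ an entirely analogous path works. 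Since $R$ is again a longest $(u,v)$-path, Claims~\ref{c1}--\ref{c4} hold for $R$; in particular the omitted vertex $u_{i_q}^-$ has degree $k$ on $R$ (Claim~\ref{c3}) with $u_0$ among its neighbours (Claim~\ref{c4}(i)), and $u_{i_q}^-\sim u_{i_q}$ by a $P'$-edge. On $R$ the vertex $u_{i_q}$ is immediately preceded by $u_{i_q}^+$, and reading $R$ from $u_{i_q}$ towards $u_0$ one encounters, consecutively, $u_{i_q}^+,u_{i_q}^{+2},\ldots,u_{i_{q+1}}^{-2}$ (all nonadjacent to $u_{i_q}^-$ by Fact~\ref{f3}), then $u_{i_p}^-$ (nonadjacent, as $u_{i_p}^-,u_{i_q}^-\in N_{P'}^-(z)$, an independent set by Claim~\ref{c1}), then $u_{i_p}^{-2}$ (nonadjacent, by our assumption). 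This gives a run of $i_{q+1}-i_q$ consecutive non-neighbours of $u_{i_q}^-$ on $R$, bounded on the right by the neighbour $u_{i_q}$ and, further to the left, by the neighbour $u_0$. Hence $\rho(R)\ge i_{q+1}-i_q=\rho(P')+1$, contradicting the choice of $P'$. Therefore $u_{i_q}^-\sim u_{i_p}^{-2}$, which proves Fact~\ref{f5}.

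The step I expect to be the main obstacle is the construction and verification of $R$. One must confirm that the prescribed alternation of forward and backward $P'$-subpaths with the two chords and the two edges at $z$ is a genuine simple $(u,v)$-path that uses every vertex except $u_{i_q}^-$ and has the same length as $P'$; this is precisely where the Case~2 bound $i_{j+1}-i_j\ge 3$ is needed, to ensure that the relevant $P'$-subpaths are nonempty and pairwise disjoint. One then has to pin down $u_{i_q}^-$'s forced neighbours $u_0$ and $u_{i_q}$ on $R$ so that the exhibited run of $i_{q+1}-i_q$ non-neighbours genuinely lies inside one gap between two consecutive neighbours, and therefore witnesses $\rho(R)\ge\rho(P')+1$. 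Finally, the two subcases $p<q$ and $p>q+1$ call for parallel but distinct path constructions, since no literal $\overrightarrow{P'}/\overleftarrow{P'}$ symmetry is available here.
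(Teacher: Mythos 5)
Your proposal is correct and takes essentially the same approach as the paper: the same set $\{z,u_{i_{q+1}}^{-2}\}\cup N_{P'}^-(z)$ with Claim~\ref{c1} and Fact~\ref{f3} produces $p$, and for the adjacency your path $R$ is exactly the paper's path $Q$ in the case $p<q$, with the same count of $i_{q+1}-i_q$ consecutive non-neighbours of the omitted vertex $u_{i_q}^-$ giving $\rho\ge\rho(P')+1$ and contradicting the extremal choice of $P'$. The case $p>q+1$, which you leave as ``entirely analogous,'' is handled in the paper by the explicit path $u_0 \overrightarrow{P'} u_{i_q}^{-2} u_{i_{q+1}}^{-} \overrightarrow{P'} u_{i_p}^- u_{i_{q+1}}^{-2} \overleftarrow{P'} u_{i_q} z u_{i_p} \overrightarrow{P'} u_l$, which works exactly as you describe.
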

\begin{proof}[Proof of Fact~\ref{f5}] 
	Let $U:= \{u_{i_{q+1}}^{-2}, z\} \cup  N_{P'}^-(z)$.   
	Clearly, $|U|=k+1.$ 
	Since $G$ is a $[k+1,2]$-graph,  
	by Claim~\ref{c1}, there exist two vertices in $N_{P'}^- (z)$ such that both of them are adjacent to $u_{i_{q+1}}^{-2}$. 
	Therefore, there exists at least one vertex in $N_{P'}^- (z)$ distinct from $u_{i_{q+1}}^{-}$,  say $ u_{i_p}^-$, such that $u_{i_{q+1}}^{-2} \sim u_{i_p}^-$, by Fact~\ref{f3}, $p\neq q$, as desired.   

We then prove that $u_{i_q}^{-}\sim u_{i_p}^{-2}$.	
	Suppose to the contrary that $u_{i_q}^{-}\not\sim u_{i_p}^{-2}$.   
	If $p<q$, 
	by Fact~\ref{f4}(ii), $ u_{i_q}^{-2}\sim u_{i_{q+1}}^{-}$, then  we write 
	$$Q:=u_0 \overrightarrow{P'} u_{i_p}^{-}  u_{i_{q+1}}^{-2} \overleftarrow{P'} u_{i_q} 
	z u_{i_p} \overrightarrow{P'} u_{i_q}^{-2}  u_{i_{q+1}}^-  \overrightarrow{P'} u_l.$$
	Clearly, $Q$ is another longest $(u_0,u_l)$-path in $G$ and $u_{i_{q}}^-\not\in V(Q)$.  
	By Claim~\ref{c1}(ii),  
	$u_{i_{q}}^-\not\sim u_{i_p}^{-}.$  
	From Fact~\ref{f3}, 
	$u_{i_{q}}^-$ is nonadjacent to any vertex of $V(u_{i_q}^+\overrightarrow{P'} u_{i_{q+1}} )$, this implies that 
	\begin{align*}
		\rho (Q)\ge |u_{i_q}^+\overrightarrow{P'} u_{i_{q+1}}^{-2}| +
		|\{u_{i_p}^{-2} , u_{i_p}^{-} \}|
		=|u_{i_q}^+\overrightarrow{P'} u_{i_{q+1}}^-|+1=\rho(P')+1,
	\end{align*}
	which contradicts the choice of $P'$. 
	
	We now assume that $p>q+1$.  
	Again by Fact~\ref{f4}(ii),  we let  
	$$Q':=u_0 \overrightarrow{P'} u_{i_q}^{-2} u_{i_{q+1}}^{-}    
	\overrightarrow{P'} u_{i_p}^- u_{i_{q+1}}^{-2} 
	\overleftarrow{P'} u_{i_q} z u_{i_p} 
	\overrightarrow{P'} u_l.$$  
	Then $Q'$ is another longest $(u_0,u_l)$-path in $G$ and $u_{i_{q}}^-\not\in V(Q')$.  
	By Claim~\ref{c1}(ii) and Fact~\ref{f3},
	\begin{align*}
		\rho (Q')\ge |u_{i_q}^+\overrightarrow{P'} u_{i_{q+1}}^{-2}| +
		|\{u_{i_p}^{-2} , u_{i_p}^{-} \}|
		=|u_{i_q}^+\overrightarrow{P'} u_{i_{q+1}}^-|+1=\rho(P')+1,
	\end{align*}
	which contradicts the choice of $P'$, so we do  indeed have $u_{i_q}^{-}\sim u_{i_p}^{-2}$. This proves Fact~\ref{f5}.
\end{proof}
\begin{fact}\label{f6}
	$\rho(P')=2.$
\end{fact}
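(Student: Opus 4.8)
The plan is to prove $\rho(P')\le 2$ by contradiction; together with the standing hypothesis of this case, namely $\min\{|u_{i_j}^{+}\overrightarrow{P'}u_{i_{j+1}}^{-}|:1\le j\le k-1\}\ge 2$, this gives $\rho(P')=2$. So suppose $\rho(P')\ge 3$; then the gap $u_{i_q}^{+}\overrightarrow{P'}u_{i_{q+1}}^{-}$ has at least three vertices, and in particular $u_{i_q}^{+2}$ lies strictly between $u_{i_q}$ and $u_{i_{q+1}}$. Put $c:=i_p$ for the index supplied by Fact~\ref{f5} (so $p\in\{2,\dots,k\}\setminus\{q,q+1\}$). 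Facts~\ref{f4}(ii) and~\ref{f5} provide the edges
\begin{align*}
u_{i_q}^{-2}\sim u_{i_{q+1}}^{-},\qquad u_{i_{q+1}}^{-2}\sim u_{i_p}^{-},\qquad u_{i_q}^{-}\sim u_{i_p}^{-2},
\end{align*}
and $z$ is adjacent to $u_{i_q}$, $u_{i_{q+1}}$ and $u_{i_p}$. Since consecutive gaps of $P'$ have size at least $2$ and $\rho(P')\ge 3$, the six vertices $u_{i_q}^{-},u_{i_q}^{-2},u_{i_{q+1}}^{-},u_{i_{q+1}}^{-2},u_{i_p}^{-},u_{i_p}^{-2}$ are pairwise distinct, so together with the path-edges $u_{i_q}^{-2}u_{i_q}^{-}$, $u_{i_{q+1}}^{-2}u_{i_{q+1}}^{-}$, $u_{i_p}^{-2}u_{i_p}^{-}$ they form a $6$-cycle. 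This $6$-cycle, the vertex $z$, and the arcs of $P'$ are the building blocks of the rerouting; where convenient I would also use the mirror-symmetric forms of Facts~\ref{f3}--\ref{f5} obtained by running their proofs along $\overleftarrow{P'}$.

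Treating the two cases $p\le q-1$ and $p\ge q+2$ in turn (the $6$-cycle has a fixed cyclic order, so these are not interchangeable), I would stitch together maximal arcs of $P'$, the vertex $z$, and the six edges above into a $(u_0,u_l)$-path $Q$ that uses $z$ and omits exactly one of $u_{i_q}^{-},u_{i_{q+1}}^{-},u_{i_p}^{-}$ --- preferably $u_{i_q}^{-}$, so that Fact~\ref{f3} is immediately available to it. Since $Q$ is again a longest $(u,v)$-path, Claims~\ref{c1}--\ref{c4}, Fact~\ref{f3}, and the extremal choice of $P'$ all apply with the omitted vertex in the role of $z$. The construction is to be arranged so that in $Q$ there is, between two consecutive $Q$-neighbours of the omitted vertex, a block of non-neighbours of size at least $\rho(P')+1$: such a block is built from (most of) the gap $u_{i_q}^{+}\overrightarrow{P'}u_{i_{q+1}}$ --- which by Fact~\ref{f3} consists of $\rho(P')+1$ vertices none adjacent to $u_{i_q}^{-}$ --- together with one or two extra non-neighbours, for instance a $^{-}$-vertex of $N_{P'}^{-}(z)$ (non-adjacent to $u_{i_q}^{-}$ by Claim~\ref{c1}(ii)). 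Having such a $Q$ yields $\rho(Q)\ge\rho(P')+1$, contradicting the choice of $P'$; hence $\rho(P')\le 2$ and therefore $\rho(P')=2$.

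The main obstacle --- and the laborious part --- is exhibiting $Q$ explicitly: in each of the two cases one must write down a concrete assembly of arcs, $z$, and the six edges, check that it repeats no vertex and misses exactly one vertex (so that it is a longest $(u,v)$-path), and verify all the required non-adjacencies, using Fact~\ref{f3} for the gap vertices and $u_{i_{q+1}}$, Claim~\ref{c1}(ii) for the $^{-}$-vertices of $N_{P'}^{-}(z)$, and Claim~\ref{c4}(v) to exclude stray neighbours of the omitted vertex. The hypothesis $\rho(P')\ge 3$ is exactly what creates enough room to push the block past size $\rho(P')$: with $\rho(P')\le 2$ the analogous rerouting --- compare the path $Q$ constructed in the proof of Fact~\ref{f5} --- attains only a gap of size $\rho(P')$, because the extra non-neighbour it would otherwise exploit is $u_{i_p}^{-2}$, which by Fact~\ref{f5} has just been shown to be a neighbour of $u_{i_q}^{-}$.
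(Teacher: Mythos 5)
There is a genuine gap. Your argument is a plan rather than a proof: the entire burden of the statement is the explicit construction of the rerouted path $Q$ together with the verification that its omitted vertex has a gap of size at least $\rho(P')+1$, and this is exactly the part you defer (``the laborious part is exhibiting $Q$ explicitly''). Worse, the toolkit you restrict yourself to does not suffice. Your ``$6$-cycle'' uses only edges already established before Fact~\ref{f6} (the path edges, $u_{i_q}^{-2}\sim u_{i_{q+1}}^{-}$ from Fact~\ref{f4}(ii), and $u_{i_{q+1}}^{-2}\sim u_{i_p}^{-}$, $u_{i_q}^{-}\sim u_{i_p}^{-2}$ from Fact~\ref{f5}), and the last of these is incident with $u_{i_q}^{-}$, the very vertex you intend to omit, so it cannot appear in $Q$ at all. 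With only $u_{i_q}^{-2}u_{i_{q+1}}^{-}$, $u_{i_{q+1}}^{-2}u_{i_p}^{-}$ and the edges at $z$, the available reroutings are essentially those already used in Facts~\ref{f4}(i) and~\ref{f5}, and, as you yourself note at the end, they produce a block of non-neighbours of $u_{i_q}^{-}$ of size only $\rho(P')$ once $u_{i_q}^{-}\sim u_{i_p}^{-2}$ is known. Nothing in your outline identifies where an extra non-neighbour comes from when $\rho(P')\ge 3$; the assertion that this hypothesis ``creates enough room'' is exactly the missing idea.

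The paper uses $\rho(P')\ge 3$ quite differently: it guarantees that the vertex $u_{i_{q+1}}^{-3}$ exists inside the long gap, and then the $[k+1,2]$ condition is applied to the \emph{new} set $\{u_{i_{q+1}}^{-3},z\}\cup N_{P'}^{-}(z)$ (using Claim~\ref{c1} and Fact~\ref{f3}) to extract a fresh edge $u_{i_{q+1}}^{-3}\sim u_{i_a}^{-}$ with a third index $a\in\{2,\ldots,k\}\setminus\{q,p\}$. Combining this new edge with those of Facts~\ref{f4}(ii) and~\ref{f5}, the paper writes down an explicit Hamilton $(u_0,u_l)$-path in each of six subcases according to the relative positions of $a$, $p$, $q$ (Tables~\ref{t1} and~\ref{t2}), contradicting the nonexistence of a Hamilton $(u,v)$-path directly, rather than contradicting the maximality of $\rho(P')$. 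To repair your proposal you would need either to carry out this additional application of the $[k+1,2]$ hypothesis (or some substitute producing a new edge depending on $\rho(P')\ge 3$) and then actually exhibit the paths, or to find a different source of the $(\rho(P')+1)$-block; as written, the argument does not close.
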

\begin{proof}[Proof of Fact~\ref{f6}] 
	Suppose to the contrary that $\rho(P')\ge 3.$ 
	Let $U:= \{u_{i_{q+1}}^{-3}, z\}\cup  N_{P'}^- (z)$. Since $G$ is a $[k+1,2]$-graph, by Claim~\ref{c1} and Fact~\ref{f3}, there exist two integers $a,b\in\{ 2,\ldots, k\}\setminus \{q \}$ 
	such that $u_{i_{q+1}}^{-3}\sim u_{i_a}^-$ and  $u_{i_{q+1}}^{-3}\sim u_{i_b}^-$. 
	Then there exists at least one of $\{a,b\}$ is not $p$, without loss of generality, we assume that $a\in\{ 2,\ldots, k\}\setminus \{q,p \}$.  
	Combine with Fact~\ref{f5} gives that there exists a Hamilton $(u_0,u_l)$-path in $G$ (see Tables~\ref{t1} and \ref{t2}), a contradiciton.  This proves Fact~\ref{f6}.
	\begin{table}[h]
		\centering
		\caption{$p<q$}\label{t1}
		\begin{tabular}{@{}lll@{}} 
			\toprule 
			$p<q$ &   
			A Hamilton $(u_0,u_l)$-path in $G$ \\ 
			\midrule 
			$a<p:$ & $u_0 \overrightarrow{P'}  u_{i_a}^- u_{i_{q+1}}^{-3} 
			\overleftarrow{P'} u_{i_p} z u_{i_a} \overrightarrow{P'} u_{i_p}^- u_{i_{q+1}}^{-2}
			\overrightarrow{P'} u_l$ \\
			$p<a<q:$ & $u_0 \overrightarrow{P'} u_{i_p}^{-2}  u_{i_q}^- \overleftarrow{P'} 
			u_{i_a} z u_{i_q} \overrightarrow{P'} u_{i_{q+1}}^{-3} u_{i_a}^- 
			\overleftarrow{P'} u_{i_p}^- u_{i_{q+1}}^{-2} \overrightarrow{P'} u_l $ \\
			$q+1\le a:$ & $u_0 \overrightarrow{P'} u_{i_p}^{-2}  u_{i_q}^- 
			\overleftarrow{P'}  u_{i_p}^- u_{i_{q+1}}^{-2} \overrightarrow{P'} 
			u_{i_a}^- u_{i_{q+1}}^{-3} \overleftarrow{P'} u_{i_q} z u_{i_a} \overrightarrow{P'} 
			u_l$ \\
			\bottomrule 
		\end{tabular}
	\end{table} 
	%
		\begin{table}[h]
		\centering
		\caption{$p>q+1$}\label{t2}
		\begin{tabular}{@{}lll@{}} 
			\toprule 
			$q+1<p$ &   
			A Hamilton $(u_0,u_l)$-path in $G$  \\ 
			\midrule 
			$a<q:$ & $u_0 \overrightarrow{P'}  u_{i_a}^- u_{i_{q+1}}^{-3}  
			\overleftarrow{P'} u_{i_q} z u_{i_a} \overrightarrow{P'} u_{i_q}^- 
			u_{i_p}^{-2} \overleftarrow{P'} u_{i_{q+1}}^{-2} u_{i_p}^-
			\overrightarrow{P'} u_l$ \\
			$q<a<p:$ & $u_0 \overrightarrow{P'} u_{i_q}^- u_{i_p}^{-2} \overleftarrow{P'} 
			u_{i_a} z u_{i_q} \overrightarrow{P'} u_{i_{q+1}}^{-3} u_{i_a}^- 
			\overleftarrow{P'} u_{i_{q+1}}^{-2} u_{i_p}^-
			 \overrightarrow{P'} u_l $ \\
			$ p<a:$ &  $u_0 \overrightarrow{P'}  u_{i_q}^- u_{i_p}^{-2} 
			\overleftarrow{P'} u_{i_{q+1}}^{-2}  u_{i_p}^- 
			\overrightarrow{P'} u_{i_a}^- u_{i_{q+1}}^{-3} 
			\overleftarrow{P'} u_{i_q} z u_{i_a} 
			\overrightarrow{P'} u_l $\\
			\bottomrule 
		\end{tabular}
	\end{table} 
\end{proof}
From Fact~\ref{f6}, one has 
$|u_{i_j}^+\overrightarrow{P'} u_{i_{j+1}}^-|   = 2$ for each $j\in\{1,\ldots, k -1\}$, 
this implies that 
$|G|=3k-1$.  
By the definition of $q$, 
Facts~\ref{f3}, \ref{f4} and \ref{f5} still hold if we replace the subscript $q$ with $j$ for each $j \in \{2, \ldots, k - 1\}$. 

Since $u_{i_2}^{-2} = u_{i_{1}}^+$, by Claim~\ref{c1}, 
$u_{i_{1}}^+$ is nonadjacent to any vertex of $\{z\}\cup N_{P'}^+ (z)$. 
By Fact~\ref{f4}(ii),  $u_{i_2}^{-2}\sim u_{i_{3}}^{-}$, combine with Claim~\ref{c4}(v), 
$u_{i_2}^{-2}$ is nonadjacent to any vertex of $\{u_{i_2}, u_{i_{3}} \}$. 
Together with Fact~\ref{f4}(i), $ u_{i_2}^{-2}$	is nonadjacent to any vertex of $N_{P'}^-(z)\setminus \{u_{i_2}^{-}, u_{i_{3}}^{-}\}$, we have  
$$d_G(u_{i_2}^{-2})\le |  N_{P'}(z)\setminus \{u_{i_2}, u_{i_{3}}\}|  +  |\{u_{i_2}^-, u_{i_3}^-\}| =k-2+2 =k.$$ 
Recall that $G$ is  $k$-connected. Therefore, 
$N_G(u_{i_2}^{-2})= \{u_{i_2}^-, u_{i_3}^-\}\cup N_{P'}(z)\setminus \{u_{i_2}, u_{i_3}\}$.  

If $k\ge 4,$ 
then $ u_{i_{4}}$ exists, 
by a similar discussion, 
$N_G(u_{i_3}^{-2})= \{u_{i_3}^-, u_{i_{4}}^-\}\cup N_{P'}(z)\setminus \{u_{i_3}, u_{i_{4}}\}$.  
Therefore, 
$$u_0  u_{i_3}^{-2} u_{i_3}^{-}  u_{i_2}^{-2} u_{i_2}^{-} u_{i_2} z 
u_{i_3}
\overrightarrow{P'} u_l $$ is a Hamilton $(u_0,u_l)$-path in $G$, a contradiction. 

Hence $k=3,$ then $G$ is $3$-connected. 
By Fact~\ref{f3}, $u_{i_2}^-$ is nonadjacent to any vertex of 
$\{u_{i_2}^+, u_{i_2}^{+2}, u_{i_3}\}$. 
By Claim~\ref{c1}, $u_{i_2}^-\not\sim z$. 
Since $d_G(u_{i_2}^-)\ge 3$,  we have $u_{i_2}^-\sim u_0.$ 
By Fact~\ref{f4}(ii), $ u_{i_2}^{-2}\sim u_{i_{3}}^{-}$.   
It follows that 
$$u_0  u_{i_2}^{-} u_{i_2}^{-2} u_{i_3}^{-} u_{i_3}^{-2} u_{i_2} z u_l$$ 
is a Hamilton $(u_0,u_l)$-path in $G$, a contradiction.  
This completes the proof of Theorem~\ref{thm1}.
\hfill$\Box$
\section{\normalsize Some remarks}\label{s3} 
In this paper, we have proved that 
every $k$-connected $[k+1,2]$-graph is hamiltonian-connected 
except $kK_1\vee G_{k}$, where $k\ge 2$ and $G_{k}$ is an arbitrary graph of order $k.$    
Recently, Professor Xingzhi~Zhan posed the following problem~\cite{Zhan1}. 
\begin{problem}
	What is the minimum size of a connected $[s,t]$-graph of order $n$?
\end{problem}
 We provide a lower bound as follows.
\begin{thm}
  Let $G$ be an $[s,t]$-graph of order $n.$ 
  Then $e(G)\ge \frac{tn(n-1)}{s(s-1)} .$ 
\end{thm}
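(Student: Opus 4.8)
The plan is a straightforward double-counting argument over all $s$-element subsets of $V(G)$, so I will assume $n\ge s$ (otherwise the $[s,t]$-condition is vacuous and there is nothing to prove). First I would fix the quantity
\[
\Sigma:=\sum_{S\subseteq V(G),\,|S|=s} e(G[S]),
\]
and estimate it in two ways. For the lower bound, the hypothesis that $G$ is an $[s,t]$-graph says precisely that $e(G[S])\ge t$ for every $s$-subset $S$, hence $\Sigma\ge t\binom{n}{s}$.

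Next I would compute $\Sigma$ exactly by switching the order of summation: each edge $uv\in E(G)$ contributes $1$ to $e(G[S])$ for exactly those $S$ that contain both $u$ and $v$, and the number of such $S$ is $\binom{n-2}{s-2}$. Therefore $\Sigma=e(G)\binom{n-2}{s-2}$. Combining the two gives
\[
e(G)\binom{n-2}{s-2}\ \ge\ t\binom{n}{s},
\]
and the final step is the routine binomial simplification
\[
\frac{\binom{n}{s}}{\binom{n-2}{s-2}}=\frac{n(n-1)}{s(s-1)},
\]
which yields $e(G)\ge \dfrac{tn(n-1)}{s(s-1)}$, as claimed.

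There is essentially no obstacle here: the argument is a clean application of the handshake-style averaging principle, and the only point requiring (minimal) care is to note $\binom{n-2}{s-2}>0$, which holds because $n\ge s\ge 2$ in any meaningful instance (if $s\le 1$ the statement is trivial or degenerate). One could optionally remark that equality is approached by graphs in which every $s$-set induces exactly $t$ edges, tying the bound to Problem~1, but that is not needed for the proof itself.
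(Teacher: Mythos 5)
Your proposal is correct and is essentially identical to the paper's own proof: both count $\sum_{|S|=s} e(G[S])$ in two ways, obtaining $\binom{n-2}{s-2}e(G)\ge \binom{n}{s}t$, and then simplify the ratio of binomial coefficients to $\frac{n(n-1)}{s(s-1)}$. The extra remarks about the degenerate cases $n<s$ and $s\le 1$ are fine but not needed.
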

\begin{proof}
  Using the so-called double counting technique, 
  one has 
  \begin{align*}
    \binom{n-2}{s-2}e(G)=\sum_{\{v_1,\ldots, v_s\}} e( G[\{v_1,\ldots, v_s\}] )
    \ge \binom{n}{s}t,
  \end{align*}
where the summation is over all $s$-element subsets of $V(G)$. 
It implies that $e(G)\ge \frac{tn(n-1)}{s(s-1)}.$ 
\end{proof}

\section*{\normalsize Acknowledgement}  The author is grateful to Professor Xingzhi Zhan for his constant support and guidance. This research  was supported by the NSFC grant 12271170 and Science and Technology Commission of Shanghai Municipality (STCSM) grant 22DZ2229014.

\section*{\normalsize Declaration}

\noindent\textbf{Conflict~of~interest}
The author declares that he has no known competing financial interests or personal relationships that could have appeared to influence the work reported in this paper.

\noindent\textbf{Data~availability}
No data was used for the research described in the article.

\end{document}